\theoremstyle{plain} 
\newtheorem{theorem}{\indent Theorem}[section] 
\newtheorem{lemma}[theorem]{\indent Lemma}
\newtheorem{proposition}[theorem]{\indent Proposition}
\newtheorem*{theorem0}{\indent Theorem} 
\theoremstyle{definition} 
\newtheorem{example}[theorem]{\indent Example}
\theoremstyle{remark} 
\newcommand{\Z}{{\mathbb{Z}}} 
\newcommand{\R}{{\mathbb{R}}} 
\newcommand{\C}{{\mathbb{C}}} 
\newcommand{\E}{{\mathbb{E}}} 
\newcommand{\F}{{\mathbb{F}}} 
\newcommand{\K}{{\mathbb{K}}} 
\newcommand{\M}{{\mathbb{M}}} 
\renewcommand{\P}{{\mathbb{P}}} 
\newcommand{\cA}{{\mathcal{A}}} 
\newcommand{\cB}{{\mathcal{B}}} 
\newcommand{\cC}{{\mathcal{C}}} 
\newcommand{\cF}{{\mathcal{F}}} 
\newcommand{\cI}{{\mathcal{I}}} 
\newcommand{\cM}{{\mathcal{M}}} 
\newcommand{\cN}{{\mathcal{N}}} 
\newcommand{\diam}{{\operatorname{diam}\,}}
\newcommand{\id}{{\operatorname{id}}}
\newcommand{\inte}{{\operatorname{int}\,}}
\newcommand{\norm}[1]{\Vert#1\Vert} 
\def\overA{\overline{A}}
\def\underA{\underline{A}}
\begin{document}
\title[limit theorem for PDs of MPPs]
{A Limit theorem for persistence diagrams of random filtered complexes built over marked point processes}
\author[T. Shirai]{Tomoyuki Shirai}
\author[K. Suzaki]{Kiyotaka Suzaki}

\subjclass[2020]{
  Primary
  60K35, 
  60B10; 
  Secondary
  55N20. 
}
\keywords{
  Marked point process, persistence diagram, persistent Betti number, random topology
}

\address{
  Institute of Mathematics for Industry \endgraf
  Kyushu University \endgraf
  744, Motooka, Nishi-ku, \endgraf
  Fukuoka, 819-0395, Japan
}
\email{shirai@imi.kyushu-u.ac.jp}

\address{
  Mathematical Science Education Center \endgraf
  Headquarters for Admissions and Education \endgraf
  Kumamoto University \endgraf
  2-40-1 Kurokami, Chuo-ku, \endgraf
  Kumamoto, 860-8555, Japan
}
\email{k-suzaki@kumamoto-u.ac.jp}

\begin{abstract}
  We consider random filtered complexes built over marked point processes on Euclidean spaces.
  Examples of our filtered complexes include
  a filtration of $\check{\textrm{C}}$ech complexes
  of a family of sets with various sizes, growths, and shapes.
  We establish the law of large numbers for persistence diagrams
  as the size of the convex window observing a marked point process tends to infinity.
\end{abstract}

\maketitle

\section{Introduction}

Much attention has been paid to topological data analysis
(TDA) over the last few decades and
persistent homology has been playing a central role as
one of the most important tools in TDA.
Persistent homology measures persistence of topological
feature, in particular, appearance and disapperance
of homology generators in each dimension and enables us
to view data sets in multi-resolutional way.
There are several aspects to be discussed in the theory of
persistent homology, among which we focus on the random
aspect.
Data sets to be analyzed are often represented as binomial
processes if each data point is regarded as a sample from a certain
probability distribution and as stationary point processes if
data points are considered as part of a huge object.
There have been many works on the topology of binomial processes
from the viewpoint of manifold learning \cite{ATT, BM, BO}.
In the setting of stationary point processes,
Yogeshwaran-Adler \cite{YA} discussed the topology of random
complexes built over stationary point processes in the
Euclidean space and showed the strong law of large numbers for
Betti numbers of such random complexes.
In the same setting, Hiraoka-Shirai-Trinh \cite{HST} proved the
strong law of large numbers for persistence diagrams, which
comprise all information about persistence Betti numbers, and
also discussed the positivity of its limiting persistence diagram.
In the present paper, we extend the framework to deal with
random filtered complexes built over stationary \textit{marked} point
processes in order to include more natural examples such as
\textit{weighted} complexes (\cite{BLMRS}, \cite{BCOSS}, \cite{MALWX}, and references therein).

Given data as a finite point configuration $\Xi$ in
$\R^{d}$, we consider the union of closed balls
$\cup_{x\in \Xi}\overline{B}_{t}(x)$ of radius $t\ge 0$
centered at each data point $x\in \Xi$, which we denoted by
$X(\Xi, t)$. We are interested in
how the $q$-dimensional homology classes of
$X(\Xi, t)$ behave as $t$ grows.
By the so-called Nerve theorem, it is well-known that
$X(\Xi, t)$ is homotopy
equivalent to the $\check{\textrm{C}}$ech complex $C(\Xi,
  t)$, which is defined
as a simplicial complex over points in $\Xi$
consisting of $q$-simplices $\sigma=\{x_{0}, x_{1},
  \dots, x_{q}\}$ for which $\cap_{i=0}^{q}\overline{B}_{t}(x_{i})\neq \emptyset$.
We thus obtain a filtration of simplicial complexes
$C(\Xi)=\{C(\Xi, t)\}_{t\ge 0}$ from $\Xi$.  The $q$th
persistent homology of the filtration $C(\Xi)$
gives more topological information of data than
the homologies of snapshots of $C(\Xi, t)$
(cf. \cite{ELZ} and \cite{ZC}).

When we look at an atomic configuration,
it is natural to consider the influence of atomic radii.
In the usual setting, as explained above, we start from a finite set
of points in $\Xi$ and attach balls of radius $t$ to
construct the $\check{\textrm{C}}$ech complex, however,
taking atomic radii into account, it would be natural to
start from  a finite set of balls with initial radii rather
than a finite set of points.
If points are considered to have different shapes,
it would be better to attach
a different shape of $V_t(x_i)$ to the $i$th point
instead of a ball $B_t(x_i)$ depending on the shape of the
$i$th point.
In many applications, each point often has some extra information
and so we would like to incorporate it in our framework.
For this purpose, in the present paper,
we introduce a filtration of simplicial complexes
built over finite sets on $\R^{d}$ with \textit{marks} in a complete separable metric space $\M$.
Here by marks we mean additional information of data and
several information at each point can be expressed as a
mark by taking $\M$ appropriately.

Now we introduce some notations to state our main theorems.
We say that a nonempty finite subset $\Xi$ of $\R^{d}\times \M$ is a simple marked point set if
$\#(\Xi\cap \pi^{-1}\{x\})\le 1$ holds for any
$x\in \R^{d}$, where $\#A$ is the cardinality of a set $A$
and $\pi\,:\,\R^{d}\times \M\to \R^{d}$ is the natural
projection.
For a simple marked point set $\Xi$,
by forgetting marks by $\pi$, we obtain a simple
point set $\Xi_{g} = \pi(\Xi) \subset \R^d$ as
the ground point set of $\Xi$.
For a given simple marked point set $\Xi$,
we define a filtration of simplicial complexes
$\K(\Xi)=\{K(\Xi, t)\}_{t\ge 0}$
with the vertex sets in
$\Xi_{g}$ by assigning the
birth time $\kappa(\sigma)$ for each simplex
$\sigma_{g}=\pi(\sigma)\subset \Xi_{g}$, that is,
$K(\Xi, t)=\{\sigma_{g}\subset
  \Xi_{g}\,:\,\kappa(\sigma)\le t\}$, where
$\kappa$ is a function defined on the nonempty finite
subsets of $\R^{d}\times \M$ with some appropriate
conditions (see Section~\ref{subsec:kappa}).
We call $\K(\Xi)$ the $\kappa$-filtered
complex built over a simple marked point set $\Xi$.
This is a marked-version of $\kappa$-complex
(resp. filtration) introduced in
\cite{HST} as a generalization of $\check{\textrm{C}}$ech
and Vietoris-Rips complex (resp. filtration).
For example, if we consider the case where $\M$ is the
closed interval $[0, R]$ and
\[
  \kappa(\sigma)
  =\inf_{w\in \R^{d}}\max_{(x, r)\in \sigma}(\norm{x-w}-r)^{+} \text{ for finite } \sigma\subset \R^{d}\times \M,
\]
then $\K(\Xi, t)$ is the $\check{\textrm{C}}$ech complex of the family of closed balls
$\{\overline{B}_{t+r}(x)\}_{(x, r)\in \Xi}$,
which is the case where we start from balls with
various initial radii (Example~\ref{ex.K1}).
Thus our framework enables us to consider a filtration of
$\check{\textrm{C}}$ech complexes of a family of balls with various sizes naturally.
Later, we give several examples of $\kappa$-filtered complexes, which include a filtration of $\check{\textrm{C}}$ech complexes
of a family of sets with various growth speeds (Example \ref{ex of kappa 2}) and various shapes (Example \ref{ex of kappa 3}).
These examples are often called weighted complexes.

For a $\kappa$-filtered complex $\K(\Xi)$, its $q$th persistence diagram
\[
  D_{q}(\K(\Xi))
  =\{(b_{i}, d_{i})\in \Delta\,:\,i=1,2, \dots, n_{q}\}
\]
is defined by a multiset on $\Delta=\{(x, y)\in [0, \infty]\times[0, \infty]\,:\,x<y\}$ determined by the decomposition of the persistent homology (see Section  \ref{subsec:PH and PD}), which is an expression of the $q$th persistent homology.
Each $(b_{i}, d_{i})$ means that a $q$th homology class appears at $t=b_{i}$,
persists for $b_{i}\le t<d_{i}$, and disappears at $t=d_{i}$ in
$\K(\Xi)$.
In this paper, the persistence
diagram $D_{q}(\K(\Xi))$ is treated as the counting measure
\[
  \xi_{q}(\K(\Xi))=\sum_{(b, d)\in D_{q}(\K(\Xi))}\delta_{(b, d)},
\]
where $\delta_{(x, y)}$ denotes the Dirac measure at $(x, y)\in \Delta$.
Let $\Phi$ be a marked point process on $\R^{d}$ with marks in $\M$. It is a point process on $\R^{d}\times \M$ such that the ground process
$\Phi_{g}(\cdot)=\Phi(\cdot\times\M)$ is a
simple point process on $\R^{d}$. We assume that $\Phi_{g}$
has all finite moment, that is, $\E[\Phi_{g}(A)^{p}]<+\infty$ for any bounded Borel set $A$ in $\R^{d}$ and $p\ge 1$. The restricted marked point process $\Phi$ on $A\times\M$ is denoted by $\Phi_{A}$.
We discuss the strong law of large numbers for persistence diagrams of a random $\kappa$-filtered complex built over a marked point process, or more precisely, the asymptotic behavior of $\xi_{q}(\K(\Phi_{A_{n}}))$ ($\xi_{q, A_{n}}$ for short) of the $\kappa$-filtered complex $\K(\Phi_{A_{n}})=\{K(\Phi_{A_{n}}, t)\}_{t\ge 0}$ as the size of window $A_{n}$ tends to infinity, where $\{A_{n}\}_{n\in \cN}$ is an increasing net of bounded convex sets in
$\R^{d}$ with $\sup\{r>0\,:\,A_{n} \text{ contains a ball of radius } r\}\to \infty$ as $n\to \infty$.
Such a net is called a convex averaging net in $\R^{d}$.
The main purpose of this paper is to show the following.

\begin{theorem0}\label{th:main}
  Let $\Phi$ be a stationary ergodic marked point process and suppose its ground process $\Phi_{g}$ has all finite moments.
  Then for any nonnegative integer $q$,
  there exists a Radon measure $\nu_{q}$ on $\Delta$
  such that for any convex averaging net $\cA=\{A_{n}\}_{n\in\cN}$ in $\R^{d}$,
  \[
    \frac{1}{\ell(A_{n})}\xi_{q, A_{n}}\xrightarrow{v} \nu_{q}
    \quad \text{as} \quad n\to \infty \text{ a.s. }
  \]
  where $\ell$ is the $d$-dimensional Lebesgue measure and $\xrightarrow{v}$ denotes
  the vague convergence of measures on $\Delta$.
\end{theorem0}

New feature of this theorem is two-fold: marks and averaging nets.
The same limit theorem above for persistence diagrams is first established in \cite[Theorem $1.5$]{HST} for the case where $\{A_{n}\}_{n\in \cN}$ is the rectangles $\{[-L/2,
  L/2)^{d}\}_{L>0}$ and stationary ergodic point processes (without marks) on $\R^{d}$.
Marked point processes are often useful from application point of view (cf. \cite{BB, CSKM}) so that this extension greatly expanded the scope of application in TDA.
The limit theorem along convex averaging sequences can also be found in a recent article \cite{SW} when the underlying filtered complexes are basically $\check{\textrm{C}}$ech complexes. Our theorem is also an extension of \cite{SW} to the case of the class of $\kappa$-complexes, which includes $\check{\textrm{C}}$ech complexes as a special example.
We also remark that the papers \cite{YA} and \cite{YSA} discuss the limiting behaviour of Betti numbers of random $\check{\textrm{C}}$ech complexes built over stationary point processes.

The paper is organized as follows. We give
the statement of our results after introducing some notation and
fundamental facts in Section \ref{sec:Preliminaries and the results}.
Some examples of marked point processes and $\kappa$-filtered complexes
are also presented in this section.
In Section \ref{sec:proofs},
we show the law of large numbers for persistent Betti numbers
(Theorem \ref{th:LLN for PBNs}) to prove the main theorem (Theorem \ref{th:LLN for PDs}).

\section{Preliminaries and the results}\label{sec:Preliminaries and the results}

\subsection{$\kappa$-filtered complexes}\label{subsec:kappa}
For a topological space $S$,
let $\cF(S)$ be the collection of all finite non-empty subsets in $S$.
Given a function $f$ on $\cF(S)$, there exists a permutation
invariant function $f_{k}$ on $S^{k}$ such that
$f_{k}(s_{1}, s_{2}, \dots, s_{k})=f(\{s_{1}, s_{2}, \dots, s_{k}\})$
for any positive integer $k$.
We say a function $f$ on $\cF(S)$ is measurable
if the permutation invariant functions $\{f_{k}\}$ are Borel measurable.
In this paper, we extend the $\kappa$-filtration for (unmarked)
point processes introduced in \cite{HST} to that for marked ones.
Let $\M$ be a complete separable metric space, which stands
for the set of marks, and
$\kappa\,:\,\cF(\R^{d}\times \M)\to [0, \infty)$ a measurable function satisfying the following:

\begin{itemize}
  \item[(K1)]
        $\kappa(A)\le \kappa(B)$ if $A\subset B$.

  \item[(K2)]
        $\kappa$ is invariant under the translations
        acting on the first component only, i.e.,
        \[
          \kappa(T_{a}(A))=\kappa(A)
        \]
        for any $a\in \R^{d}$ and for any $A\in \cF(\R^{d}\times \M)$,
        where $T_{a}\,:(x, m)\mapsto (x+a, m)$.

  \item[(K3)]
        There exists an increasing function
        $\rho\,:\,[0, \infty)\to [0, \infty)$ such that
        \[
          \norm{x-y}\le \rho(\kappa(\{(x, m), (y, n)\}))
        \]
        for all $(x, m)$, $(y, n)\in \R^{d}\times \M$,
        where $\norm{\cdot}$ is the Euclidean norm on $\R^{d}$.
\end{itemize}
Let $\pi\,:\,\R^{d}\times \M\to \R^{d}$ be the projection with respect to
the first component. We say $\Xi\in
  \cF(\R^{d}\times \M)$ is a simple marked point set if
for any $x\in \R^{d}$, $\#(\Xi\cap \pi^{-1}\{x\})\le 1$ holds, where $\#A$ is the number of elememts in $A$.
For any simple marked point set $\Xi$, we write as $\Xi_{g}=\pi(\Xi)$. The projection $\pi$
naturally induces the bijection
\[
  \cF(\Xi)\ni \sigma\mapsto
  \sigma_{g}\in \cF(\Xi_{g}).
\]
Once a simple marked point set $\Xi$ is fixed,
each subset $\sigma=\{(x_{0}, m_{0}), (x_{1}, m_{1}), \dots, (x_{q}, m_{q})\}$ of $\Xi$ can be regarded as a finite point configuration $\sigma_{g}=\{x_{0}, x_{1},\dots, x_{q}\}$ in $\R^{d}$ with marks $\{m_{0}, m_{1},\dots, m_{q}\}$ in $\M$. By the definition of simple marked point sets we see
that each $x\in \Xi_{g}$ has a unique mark $m\in \M$
with $(x, m)\in \Xi$.

Given a simple marked point set $\Xi$, we construct a filtration
\begin{equation}
  \K(\Xi)=\{K(\Xi, t)\}_{t\ge 0}
\end{equation}
of simplicial complexes from the simple marked point  set
$\Xi$ and
the function $\kappa$ by
\[
  K(\Xi, t)=\{\sigma_{g}\subset \Xi_{g}\,:\,\kappa(\sigma)\le t\},
\]
i.e., $\kappa(\sigma)$ is the birth time of a
simplex $\sigma_{g}$ in the filtration $\K(\Xi)$.
Note that whether or not a $q$-simplex $\sigma_{g}=\{x_{0}, x_{1},\dots, x_{q}\}
  \subset \Xi_{g}$ belongs to $K(\Xi, t)$ depends not only on itself but also on the marked set
$\sigma=\{(x_{0}, m_{0}), (x_{1}, m_{1}), \dots, (x_{q}, m_{q})\}\subset \Xi$. We call
$\K(\Xi)=\{K(\Xi, t)\}_{t\ge 0}$ the $\kappa$-filtered complex built over $\Xi$.
We also note that the conditions (K1) and (K3) of $\kappa$
yield the following diameter bound
\[
  \diam\sigma_{g} \le \rho(t)
\]
for any simplex $\sigma\in K(\Xi, t)$. Indeed, for any
$\sigma \in K(\Xi, t)$ and any $x, y\in \sigma_{g}$ we take $m, n\in \M$
with $(x, m), (y, n)\in \sigma$, then it is easy to see that
\[
  \norm{x-y}\le \rho(\kappa(\{(x, m), (y, n)\}))
  \le \rho(\kappa(\sigma))\le \rho(t).
\]

\begin{example}[$\check{\textrm{C}}$ech and Vietoris-Rips filtered complex with various sizes]\label{ex.K1}
  For a fixed $R>0$, let $\M$ be the closed interval $[0, R]$. Fundamental examples of $\kappa$ on $\cF(\R^{d}\times \M)$ are
  \[
    \begin{aligned}
      \kappa_{C}(\sigma)
       & =\inf_{w\in \R^{d}}\max_{(x, r)\in \sigma}(\norm{x-w}-r)^{+}                                    \\	\text{and} \quad
      \kappa_{R}(\sigma)
       & =\max_{(x_{1}, r_{1}), (x_{2}, r_{2})\in \sigma}\frac{(\norm{x_{1}-x_{2}}-r_{1}-r_{2})^{+}}{2},
    \end{aligned}
  \]
  where $a^{+}=\max\{a, 0\}$ for $a\in \R$.
  It is easy to see that they satisfy (K1), (K2), and (K3) with $\rho(t)=2t+2R$.
  We denote the corresponding $\kappa$-filtered complexes
  built over a simple marked point set $\Xi$ by
  $\C(\Xi)=\{C(\Xi, t)\}_{t\ge 0}$ and $\R(\Xi)=\{R(\Xi, t)\}_{t\ge 0}$ respectively. For $\sigma\in \cF(\Xi)$, we see
  that
  \[
    \begin{aligned}
      \kappa_{C}(\sigma)\le t
       & \Leftrightarrow \bigcap_{(x, r)\in \sigma}\overline{B}_{t+r}(x)\neq\emptyset,                      \\
      \kappa_{R}(\sigma)\le t
       & \Leftrightarrow \overline{B}_{t+r_{1}}(x_{1})\cap \overline{B}_{t+r_{2}}(x_{2})\neq\emptyset \quad
      \text{ for any } (x_{1}, r_{1}), (x_{2}, r_{2})\in \sigma,
    \end{aligned}
  \]
  where $\overline{B}_{r}(x)=\{y\in \R^{d}\,:\, \norm{y-x}\le r\}$ is the closure of the
  open ball $B_{r}(x)$ of radius $r$ centered at $x$.
  Hence $C(\Xi, t)$ and
  $R(\Xi, t)$ are the so-called
  $\check{\textrm{C}}$ech complex and Vietoris-Rips complex of the family of balls $\{\overline{B}_{t+r}(x)\}_{(x, r)\in \Xi}$.
\end{example}

\begin{example}[$\check{\textrm{C}}$ech and Vietoris-Rips filtered complex with various growth speeds]\label{ex of  kappa 2}
  Let $\M$ be a finite family $\{\bm{r}_{i}(\cdot)\}_{i\in I}$ of
  right continuous, strictly increasing functions on $[0, \infty)$.
  We define functions on $\cF(\R^{d}\times \M)$ by
  \[
    \begin{aligned}
      \kappa_{C}(\sigma)
       & =\inf_{w\in \R^{d}}\max_{(x, \bm{r})\in \sigma}\bm{r}^{-1}(\norm{x-w})                                      \\
      \text{ and } \quad \kappa_{R}(\sigma)
       & =\max_{(x_{1}, \bm{r}_{1}), (x_{2}, \bm{r}_{2})\in \sigma}(\bm{r}_{1}+\bm{r}_{2})^{-1}(\norm{x_{1}-x_{2}}),
    \end{aligned}
  \]
  where $\bm{r}^{-1}(t)=\inf\{s\ge 0\,:\,\bm{r}(s)\ge t\}$.
  One can show that
  \[
    \begin{aligned}
      \kappa_{C}(\sigma)\le t
       & \Leftrightarrow \bigcap_{(x, \bm{r})\in \sigma}\overline{B}_{\bm{r}(t)}(x)\neq\emptyset,                                                                                     \\
      \kappa_{R}(\sigma)\le t
       & \Leftrightarrow \overline{B}_{\bm{r}_{1}(t)}(x_{1})\cap \overline{B}_{\bm{r}_{2}(t)}(x_{2})\neq\emptyset 	\text{ for any } (x_{1}, \bm{r}_{1}), (x_{2}, \bm{r}_{2})\in \sigma
    \end{aligned}
  \]
  in the same way as Example \ref{ex.K1} above.
  In this case, (K3) is satisfied with $\rho(t)=2\max_{i\in I}\bm{r}_{i}(t)$.
  The corresponding $\kappa$-filtered complexes are
  the $\check{\textrm{C}}$ech complexes and Vietoris-Rips complexes of the family of balls
  $\{\overline{B}_{\bm{r}(t)}(x)\}_{(x, \bm{r})\in \Xi}$ for a simple marked point set $\Xi\in \cF(\R^{d}\times \M)$.
\end{example}

\begin{example}[$\check{\textrm{C}}$ech filtered complex with various shapes]\label{ex of kappa 3}
  Let $\M$ be a finite family $\{C_{i}\}_{i\in I}$ of  bounded convex sets in $\R^{d}$ satisfying that
  $0\in \inte C_{i}$ for every $i\in I$, where $\inte C$ is the interior of $C$. We put $f_{C}(z)=\inf\{s\ge 0\,:\,z\in sC\}$ for a convex set $C$ and $z\in \R^{d}$. Consider the function on $\cF(\R^{d}\times\M)$ defined by
  \[
    \kappa(\sigma)=\inf_{w\in \R^{d}}
    \max_{(x, C)\in \sigma}f_{C}(w-x).
  \]
  This satisfies (K3) with $\rho(t)=2t\max_{i\in I}\diam C_{i}$. For any simple marked point set $\Xi\in \cF(\R^{d}\times \M)$, it is easy to see that the corresponding
  $K(\Xi, t)$ is the $\check{\textrm{C}}$ech  complex of the family of sets $\{t\overline{C}+x\}_{(x, C)\in \Xi}$.
\end{example}

\subsection{Persistent homologies and persistence diagrams}\label{subsec:PH and PD}
In what follows, we fix a function $\kappa$ satisfying
the conditions (K1)--(K3) in Section \ref{subsec:kappa}.
Now we give a brief introduction of persistent homology, persistence diagrams, and
persistent Betti numbers for the $\kappa$-filtered complex $\K(\Xi)$.
Let $\F$ be a field. Given a nonnegative integer $q$ and $t\ge 0$, we denote by
$H_{q}(K(\Xi, t))$ the $q$th homology group of
the simplicial complex $K(\Xi, t)$ with coefficients in $\F$.
For $r\le s$, the inclusion
$K(\Xi, r)\hookrightarrow K(\Xi, s)$ induces the linear map
$\iota_{r}^{s}\,:\,H_{q}(K(\Xi, r))\to H_{q}(K(\Xi, s))$.
We put
$H_{q}(\K(\Xi))=(H_{q}(\{\K(\Xi, t)\}_{t\ge 0},
  \{\iota_{r}^{s}\}_{s\ge r\ge 0})$ and call it the $q$th persistent homology
(or persistence module) of $K(\Xi)$.
It is well-known that there exist a unique nonnegative
integer $n_{q}$ and $b_{i}, d_{i}\in [0, \infty]$ with $b_{i}<d_{i}$,
$i=1,2, \dots, n_{q}$, such that the $q$th persistent homology
$H_{q}(\K(\Xi))$ has a decomposition property
\begin{equation}\label{eq:decomposition of PD}
  H_{q}(\K(\Xi))\simeq \bigoplus_{i=1}^{n_{q}}I(b_{i}, d_{i}),
\end{equation}
where $I(b_{i}, d_{i})=(U_{r}, f_{r}^{s})$ consists of a family of vector spaces
\[
  U_{r}
  =\begin{cases}
    \F \quad & b_{i}\le r<d_{i},   \\
    0 \quad  & \text{ otherwise, }
  \end{cases}
\]
and the identity map $f_{r}^{s}=\id_{\F}$ for $b_{i}\le r\le s<d_{i}$. Each
$I(b_{i}, d_{i})$ in (\ref{eq:decomposition of PD}) describes that a topological
feature ($q$th homology class) appears at $t=b_{i}$,
persists for $b_{i}\le t<d_{i}$, and disappears at $t=d_{i}$ in
$\K(\Xi)$. We call the pair $(b_{i}, d_{i})$ its birth-death pair.
The $q$th persistence diagram of $\K(\Xi)$ is defined by a multiset
\[
  D_{q}(\K(\Xi))
  =\{(b_{i}, d_{i})\in \Delta\,:\,i=1,2, \dots, n_{q}\},
\]
where $\Delta=\{(x, y)\in [0, \infty]\times[0, \infty]\,:\,x<y\}$.
Let $m_{b, d}$ be the multiplicity of the point $(b, d)\in D_{q}(\Xi)$ and $\xi_{q}(\K(\Xi))$ the counting measure on $\Delta$ given by
\[
  \xi_{q}(\K(\Xi))=\sum_{(b, d)}m_{b, d}\delta_{(b, d)},
\]
where $\delta_{(x, y)}$ is the Dirac measure at $(x, y)\in \Delta$.
We identify the persistence diagram $D_{q}(\K(\Xi))$ with
the counting measure $\xi_{q}(\K(\Xi))$.
The $q$th $(r, s)$-persistent Betti number is also defined by
\[
  \beta_{q}^{r, s}(\K(\Xi))
  =\dim \frac{Z_{q}(K(\Xi, r))}
  {Z_{q}(K(\Xi, r))\cap B_{q}(K(\Xi, s))},
\]
where $Z_{q}(K(\Xi, r))$ and
$B_{q}(K(\Xi, r))$ are the $q$th cycle group and boundary
group of $K(\Xi, r)$, respectively.
It is easy to see that this number is equal to the rank of $\iota_{r}^{s}\,:\,H_{q}(K(\Xi, r))\to H_{q}(K(\Xi, s))$. By definition of the persistent Betti number, we have
\begin{equation}\label{eq:PBN}
  \beta_{q}^{r, s}(\K(\Xi))=\sum_{b\le r, s<d}m_{b, d}
  =\xi_{q}(\K(\Xi))([0, r]\times (s, \infty]).
\end{equation}
Therefore the persistence Betti number $\beta_{q}^{r, s}$ counts the
number of birth-death pairs in the persistence diagram
$D_{q}(\K(\Xi))$ located in the gray region of Figure
\ref{fig:PBN}.
\begin{figure}[h]
  \includegraphics[width=5cm]{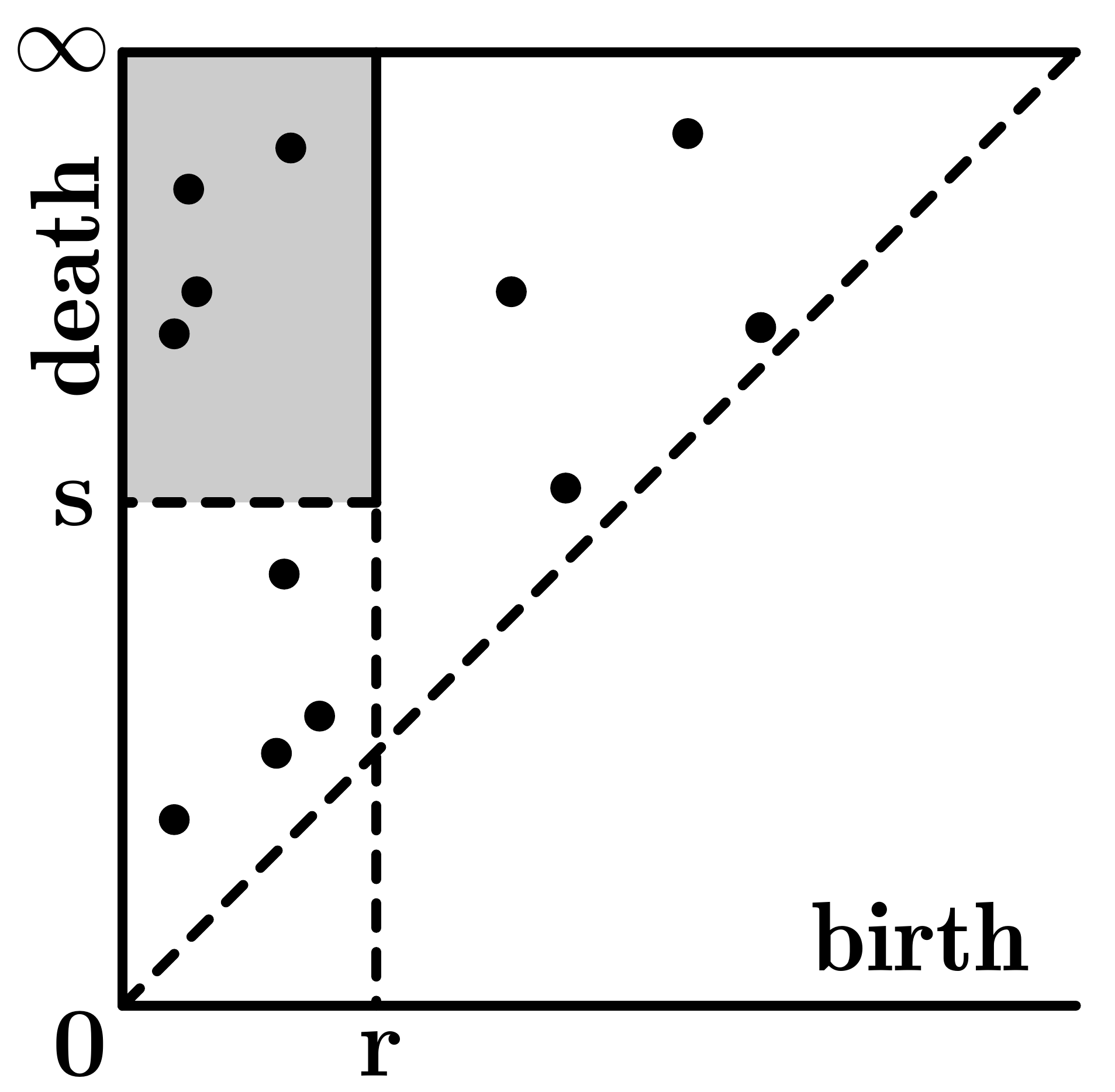}
  \caption{$\beta_{q}^{r, s}$ counts the number of birth-death pairs
    in the gray region.}
  \label{fig:PBN}
\end{figure}
Details for these facts can be found in \cite{ELZ}, \cite{HST} and \cite{ZC} for example.

\subsection{Marked point processes}
Now we consider marked point processes.
Let $X$ be a complete separable metric space and
$\cB(X)$ the Borel $\sigma$-field on $X$.
A Borel measure $\mu$ on $X$
is boundedly finite if $\mu(A)<\infty$ for every bounded Borel set $A$.
We say that a sequence $\{\mu_{n}\}$ of boundedly finite measures on $X$
converges to a boundedly finite measure $\mu$ on $X$ in the
$w^{\#}$-topology if
\begin{equation}\label{def:weak hash}
  \int_{X}f\,d\mu_{n}\to \int_{X}f\,d\mu \text{ as } n\to \infty
\end{equation}
for all bounded continuous functions $f$ on $X$ vanishing outside
a bounded set.
We denote by $\cM_{X}^{\#}$ the totality of boundedly finite
measures on $\cB(X)$. $\cM_{X}^{\#}$ is a
complete separable metric space under the $w^{\#}$-topology.
The corresponding $\sigma$-field $\cB(\cM_{X}^{\#})$ coincides with
the smallest $\sigma$-field with respect to which the mappings
$\mu\mapsto \mu(A)$ are measurable for all $A\in \cB(X)$.
If $X$ is a locally compact Hausdorff space with countable base,
we can take a metric so that $X$ is complete and every bounded subset
of $X$ is relatively compact. Then a Borel measure is boundedly finite
if and only if it is a Radon measure and $w^{\#}$-convergence coincides
with vague convergence. We recall that a Radon measure is a measure on $X$
taking finite values on compact sets and a sequence
$\{\mu_{n}\}$ of Radon measures on $X$
converges to a Radon measure $\mu$ on $X$ vaguely
(or in the vague topology) if $(\ref{def:weak hash})$ holds
for each continuous function $f$ on $X$ vanishing outside a compact set.
In this case, we write $\mu_{n}\xrightarrow{v} \mu$.
Let $\cN_{X}^{\#}$ be the totality of boundedly finite integer-valued
measures. We call a measure in $\cN_{X}^{\#}$ a counting measure for short. For a counting measure $\mu$ on $X$, there exist sequences of
positive integers $\{k_{i}\}$ and points $\{x_{i}\}$ in $X$ with
at most finitely many $x_{i}$ in any bounded Borel set such that
\[
  \mu=\sum_{i}k_{i}\delta_{x_{i}}.
\]
Note that $\cN_{X}^{\#}$ is a closed subset of $\cM_{X}^{\#}$.

Let $(\Omega, \cF, \P)$ be a probability space. An
$(\cM_{X}^{\#}, \cB(\cM_{X}^{\#}))$ (resp., $(\cN_{X}^{\#}, \cB(\cN_{X}^{\#}))$-valued random variable $\xi$ on $(\Omega, \cF, \P)$
is called a random measure (resp., point process) on $X$.
A point process $\xi$ is typically identified with the random point configuration of its atoms.
The expectation measure (or mean measure) of $\xi$ is defined so that
$M(A)=\E[\xi(A)]$ for any $A\in \cB(X)$. It is often denoted by $\E[\xi]$.
We say that a point process $\xi$ is simple if
\[
  \P(\xi(\{x\})=0 \text{ or } 1 \text{ for any } x\in X)=1.
\]
A marked point process on $\R^{d}$ with marks in $\M$ is a point process
$\Phi$ on $\R^{d}\times \M$ whose marginal point process
$\Phi_{g}(\cdot)=\Phi(\cdot\times\M)$ on $\R^{d}$ is a simple
point process on $\R^{d}$.
The point process $\Phi_{g}$ is called the ground process of $\Phi$.
We say that the ground process $\Phi_{g}$ has all finite moments if
$\E[\Phi_{g}(A)^{p}]<+\infty$ for every bounded $A\in \cB(\R^{d})$ and every
$p\ge 1$.
The translations $\{T_{a}\}_{a\in \R^{d}}$ on $\R^{d}\times \M$ induce
the translations $\{{T_{a}}_{\ast}\}_{a\in \R^{d}}$ on
$\cN_{\R^{d}\times\M}^{\#}$ defined by
\[
  ({T_{a}}_{\ast}\mu)(A)=\mu(T_{a}^{-1}A)
\]
for $a\in \R^{d}$ and $A\in \cB(\R^{d}\times\M)$. A marked point process
is called stationary if its probability distribution
is translation invariant. A stationary marked point process
$\Phi$ is called ergodic if the only members $B$ of
$\cB(\cN_{\R^{d}\times\M}^{\#})$ with
$\P\circ\Phi^{-1}({T_{a}}_{\ast}B\Delta B)=0$ for all $a\in \R^{d}$
satisfy $\P\circ\Phi^{-1}(B)=0 \text{ or } 1$.

\begin{example}[point process with i.i.d.\ marks]
  Let $\Phi_{g}$ be a point process on $\R^{d}$ and
  $\{X_{i}\}$ a measurable enumeration of $\Phi_{g}$, that
  is, $\{X_{i}\}$ is a sequence of $\R^{d}$-valued random variables so that $\Phi_{g}=\sum_{i}\delta_{X_{i}}$ a.s.
  We take an i.i.d.\ sequence of $\M$-valued random variables $\{M_{i}\}$ such that $\Phi=\{X_{i}\}$
  and $\{M_{i}\}$ are independent. A marked point process
  on $\R^{d}$ with marks in $\M$ is defined by
  \[
    \Phi=\sum_{i}\delta_{(X_{i}, M_{i})}.
  \]
  If the point process $\Phi$ is stationary (and ergodic), then so is $\Phi$.
\end{example}

\begin{example}
  Let $\Phi_{g}$ be a simple stationary (ergodic) point process on $\R^{d}$
  and $\{X_{i}\}$ a measurable enumeration of $\Phi_{g}$.
  For a fixed $R>0$ and for each $i$, we define a $\{0, 1\}$-valued random variable $M_{i}$ by
  \begin{equation}\label{def.M_{i}}
    M_{i}
    =\begin{cases}
      1, & \text{ if there exists } j\neq i \text{ such that } |X_{i}-X_{j}|\le R, \\
      0, & \text{ otherwise. }
    \end{cases}
  \end{equation}
  The point process on $\R^{d}\times \{0, 1\}$ defined by
  \[
    \Phi=\sum_{i}\delta_{(X_{i}, M_{i})}
  \]
  is a marked point process.
  In general, for measurable maps $M_{i}\colon \cN_{\R^{d}}^{\#}\to \M\,(i\ge 1)$, the point process on $\R^{d}\times \M$ defined by
  \[
    \Phi=\sum_{i}\delta_{(X_{i}, M_{i}(\Phi_{g}))}
  \]
  is a stationary marked point process.

  Incidentally, for $M_{i}\,(i\ge 1)$ in \eqref{def.M_{i}},  the point process
  \[
    \Phi_{I}=\sum_{i: M_{i}=0}\delta_{X_{i}}
  \]
  on $\R^{d}$ is called a Mat\'ern type I construction of $\Phi$ (See \cite{NB}).
\end{example}

Other examples and basic facts for marked point processes are available in \cite{DV1}
and \cite{DV2}.

\subsection{Main theorems}
In order to state the main results we introduce the notion of convex averaging nets in $\R^{d}$.
Let $(\cN, \le)$ be a linearly ordered set.
A family $\cA=\{A_{n}\}_{n\in\cN}$ of bounded
Borel sets in $\R^{d}$ is a convex averaging net if
\begin{itemize}
  \item[(i)]
        $A_{n}$ is convex for each $n\in \cN$,

  \item[(ii)]
        $A_{n}\subset A_{m}$ for $n\le m$, and

  \item[(iii)]
        $\displaystyle{\sup_{n\in \cN}r(A_{n})=\infty}$, where
        $r(A)=\sup\{r>0\,:\,A \text{ contains a ball of radius } r\}$.
\end{itemize}
Given a marked point process $\Phi$ and
$A\in \cB(\R^{d})$,
we denote the restricted marked point process $\Phi$ on $A\times \M$ by $\Phi_{A}$, i.e., $\Phi_{A}(\cdot)
  =\Phi(\cdot\cap (A\times \M))$.
Note that $\Phi_{A}$ can be regarded as a random simple marked point set
for any bounded $A$.
For any convex averaging net $\cA=\{A_{n}\}_{n\in \cN}$,
a random $\kappa$-filtered complex
with parameter $n\in \cN$ is defined by
$\K(\Phi_{A_{n}})
  =\{K(\Phi_{A_{n}}, t)\}_{t\ge 0}$. For the sake of simplicity, we
often denote the corresponding $q$th persistence diagram
$\xi_{q}(\K(\Phi_{A_{n}}))$
and $q$th $(r, s)$-persistent Betti number
$\beta_{q}^{r, s}(\K(\Phi_{A_{n}}))$ by
$\xi_{q, A_{n}}$ and $\beta_{q, A_{n}}^{r, s}$
respectively.

Now we are in a position to state the main theorem.

\begin{theorem}\label{th:LLN for PDs}
  Let $\Phi$ be a stationary marked point process and suppose
  its ground process $\Phi_{g}$ has all finite moments. Then for any nonnegative  integer $q$,
  there exists a Radon measure $\nu_{q}$ on $\Delta$
  such that for any convex averaging net $\cA=\{A_{n}\}_{n\in\cN}$ in $\R^{d}$,
  \[
    \frac{1}{\ell(A_{n})}\E[\xi_{q, A_{n}}]\xrightarrow{v} \nu_{q}
    \quad \text{as} \quad n\to \infty,
  \]
  where $\ell$ is the $d$-dimensional Lebesgue measure.
  Furthermore if $\Phi$ is ergodic, then
  \[
    \frac{1}{\ell(A_{n})}\xi_{q, A_{n}}\xrightarrow{v} \nu_{q}
    \quad \text{as} \quad n\to \infty \text{ a.s. }
  \]
\end{theorem}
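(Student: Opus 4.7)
The plan rests on the identity \eqref{eq:PBN}, which identifies the persistent Betti number $\beta_q^{r,s}(\K(\Xi))$ with the $\xi_q(\K(\Xi))$-mass of the upper-left rectangle $[0,r]\times(s,\infty]\subset\Delta$. My plan therefore has two stages: first establish a law of large numbers for persistent Betti numbers $\beta_{q,A_n}^{r,s}/\ell(A_n)$, and then promote this to vague convergence of the random measures $\xi_{q,A_n}/\ell(A_n)$.

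For the first stage I would use a \emph{pseudo-additive} decomposition of $\beta_{q,A_n}^{r,s}$. Fix $(r,s)\in\Delta$, choose $L\gg\rho(s)$ with $\rho$ the diameter bound of (K3), and tile $\R^d$ by half-open cubes $\{Q_k\}_{k\in\Z^d}$ of side $L$. For each cube form the local persistent Betti number $\beta_{q,Q_k}^{r,s}$ computed from $\Phi_{Q_k^+}$ on a slight enlargement $Q_k^+$ containing every simplex active by time $s$ with vertices in $Q_k$. Adapting the Mayer--Vietoris style estimate of Hiraoka--Shirai--Trinh to the marked filtration $\K(\Phi_{A_n})$, the discrepancy $|\beta_{q,A_n}^{r,s}-\sum_{Q_k\subset A_n}\beta_{q,Q_k}^{r,s}|$ is dominated by the number of simplices of $K(\Phi_{A_n},s)$ whose vertex set either crosses some $\partial Q_k$ or meets the $\rho(s)$-neighborhood of $\partial A_n$. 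The all-finite-moments hypothesis on $\Phi_g$ together with the diameter bound controls the expectation of this boundary count, and condition (iii) of a convex averaging net forces the $\rho(s)$-neighborhood of $\partial A_n$ to have volume $o(\ell(A_n))$, so the error is negligible after normalization. Stationarity of $\Phi$ then turns $\sum_{Q_k\subset A_n}\beta_{q,Q_k}^{r,s}/\ell(A_n)$ into a Birkhoff-type spatial average whose expectation converges (as $L\to\infty$) to a constant $\hat\beta_q(r,s)$; in the ergodic case the multidimensional pointwise ergodic theorem for stationary marked point processes along convex averaging nets (Nguyen--Zessin) upgrades this to almost sure convergence to the same constant.

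To pass from PBN convergence to vague convergence, observe that $\hat\beta_q(r,s)$ is nondecreasing in $r$ and nonincreasing in $s$ and finite everywhere on $\Delta$. Define, for bounded rectangles $[a,b]\times(c,d]\subset\Delta$,
\[
\nu_q([a,b]\times(c,d]) := \hat\beta_q(b,c) - \hat\beta_q(a,c) - \hat\beta_q(b,d) + \hat\beta_q(a,d),
\]
a quantity which is nonnegative because it is the limit of the nonnegative ratios $\xi_{q,A_n}([a,b]\times(c,d])/\ell(A_n)$. A Carath\'eodory extension yields a Borel measure on $\Delta$ that is finite on compacts (hence Radon). The half-open rectangles whose boundaries are continuity sets of $\nu_q$ form a $\pi$-system generating the vague topology on Radon measures on $\Delta$, so a standard Portmanteau-type argument converts rectangle-by-rectangle convergence into the vague convergence $\xi_{q,A_n}/\ell(A_n)\xrightarrow{v}\nu_q$.

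The main obstacle will be the pseudo-additive estimate for persistent Betti numbers, simultaneously adapted to marks and to general convex averaging nets. In HST the window was a cube $[-L/2,L/2)^d$ and the periodic tiling aligned with the window exactly, which simplified the boundary accounting. For a general $A_n$ no such alignment is available, and one must rely only on the quantitative surface estimate $\ell(\partial A_n\oplus B_{\rho(s)}(0))=o(\ell(A_n))$ that follows from condition (iii); combining this with a simplex-by-simplex version of the HST Mayer--Vietoris bound, and checking that the marks (valued in a general complete separable metric space $\M$) do not spoil the constants in the moment estimates, is where the careful work lies.
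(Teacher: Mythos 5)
Your proposal follows essentially the same route as the paper: a cube-tiling pseudo-additivity argument based on the Hiraoka--Shirai--Trinh Mayer--Vietoris-type bound (Lemma \ref{lem:estimates for PBNs}), combined with the moment bound on simplex counts, the surface-volume estimates for convex averaging nets (Proposition \ref{prop:lim for convex sets}), and the Nguyen--Zessin ergodic theorem (Proposition \ref{prop:ergodic th}), yields the law of large numbers for persistent Betti numbers (Theorem \ref{th:LLN for PBNs}), after which vague convergence of the diagrams is deduced from rectangle masses written as alternating sums of persistent Betti numbers. The only cosmetic difference is that where you invoke a Carath\'eodory extension plus a Portmanteau-type argument, the paper cites the convergence-determining-class lemmas from HST (Lemmas \ref{key lemma1} and \ref{key lemma2}), which in particular include the rectangles reaching $s_{2}=\infty$ and supply the countable class needed for the almost-sure statement.
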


Theorem \ref{th:LLN for PDs} can be proved by a general theory of Radon measures
for the vague convergence and the following law of
large numbers for persistent Betti numbers.

\begin{theorem}\label{th:LLN for PBNs}
  Let $\Phi$ be a stationary marked point process and suppose
  its ground process $\Phi_{g}$ has all finite moments. Then, for
  any $0\le r\le s<\infty$ and nonnegative integer $q$, there exists
  a nonnegative number $\hat{\beta}_{q}^{r, s}$
  such that for any convex averaging net $\cA=\{A_{n}\}_{n\in \cN}$ in $\R^{d}$,
  \[
    \frac{1}{\ell(A_{n})}
    \E[\beta_{q, A_{n}}^{r, s}]
    \to \hat{\beta}_{q}^{r, s}
    \qquad \text{as} \quad n\to \infty.
  \]
  Furthermore, if $\Phi$ is ergodic, then
  \[
    \frac{1}{\ell(A_{n})}\beta_{q,A_{n}}^{r, s}
    \to \hat{\beta}_{q}^{r, s}
    \quad \text{ as } \quad n\to \infty \text{ a.s. }
  \]
\end{theorem}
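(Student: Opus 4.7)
The plan is to adapt the strategy of Hiraoka--Shirai--Trinh \cite{HST} for $\kappa$-complexes to the marked setting and, simultaneously, to replace the axis-parallel rectangular sequence by an arbitrary convex averaging net. The theorem reduces to two ingredients: (i) a deterministic stability/approximate-additivity estimate for $\beta_q^{r,s}(\K(\cdot))$ as a functional on simple marked point sets, together with a moment bound supplied by the all-finite-moments hypothesis on $\Phi_g$; and (ii) a mean and pointwise ergodic theorem for stationary (ergodic) marked point processes along convex averaging nets, which is available in \cite{DV2}.

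First I would establish a one-point perturbation bound. By the standard inequality between persistent Betti numbers of complexes that differ by finitely many simplices, if $\Xi$ and $\Xi'$ are simple marked point sets differing in a single point, then
\[
\bigl|\beta_q^{r,s}(\K(\Xi)) - \beta_q^{r,s}(\K(\Xi'))\bigr| \le S_q + S_{q+1},
\]
where $S_j$ counts the $j$-simplices in $K(\Xi\cup\Xi',s)$ containing the differing vertex. Condition (K3) yields $\diam\sigma_g \le \rho(s)$ for every $\sigma\in K(\Xi\cup\Xi',s)$, so $S_j \le \binom{N}{j}$ where $N$ is the number of ground points of $\Xi\cup\Xi'$ within distance $\rho(s)$ of the perturbed point; the all-finite-moments hypothesis on $\Phi_g$ then makes this perturbation integrable to any polynomial order. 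Iterating the bound on a tiling of $A_n$ by cubes of fixed side $L\gg\rho(s)$ yields the approximate additivity
\[
\Bigl|\beta_{q,A_n}^{r,s} - \sum_{Q}\beta_q^{r,s}(\K(\Phi_Q))\Bigr| \le \mathcal{E}_n(L),
\]
where the sum runs over cubes $Q$ of the tiling meeting $A_n$ and the error $\mathcal{E}_n(L)$ is a polynomial expression in $\Phi_g$ evaluated on the $\rho(s)$-neighborhoods of the cube walls inside $A_n$ and of $\partial A_n$. The moment assumption gives $\E[\mathcal{E}_n(L)] = O(L^{-1}\ell(A_n) + \ell(\partial_{\rho(s)} A_n))$, and $\ell(\partial_{\rho(s)} A_n)/\ell(A_n)\to 0$ along any convex averaging net by standard convex-geometry estimates, so sending $n\to\infty$ and then $L\to\infty$ yields convergence of $\ell(A_n)^{-1}\E[\beta_{q,A_n}^{r,s}]$ to a limit $\hat\beta_q^{r,s}$ that is independent of the net.

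For the almost-sure statement, the cubewise functional $\Phi\mapsto \beta_q^{r,s}(\K(\Phi_Q))$ on a fixed cube $Q$ lies in $L^{1}(\P)$ by the perturbation bound and finite moments, and the invariance condition (K2) makes it translation-invariant in the relevant sense. Applying a pointwise ergodic theorem for stationary ergodic marked point processes along convex averaging nets (see \cite[Ch.~12]{DV2}) to the cubewise sums, combined with the fact that the additivity error $\mathcal{E}_n(L)$ is $o(\ell(A_n))$ almost surely by the same moment control plus Borel--Cantelli, delivers the almost-sure convergence to $\hat\beta_q^{r,s}$.

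The main obstacle is the quantitative additivity. Persistent homology is a non-local invariant — a single simplex bridging two cubes can annihilate or create a homology class living in one of them — so $\beta_q^{r,s}$ is genuinely not additive over a cube decomposition. This is handled by a Mayer--Vietoris type inequality
\[
\bigl|\beta_q^{r,s}(K\cup L) - \beta_q^{r,s}(K) - \beta_q^{r,s}(L)\bigr|
\le c\,\#\bigl\{\sigma\in K\cup L : \dim\sigma\in\{q,q+1\},\ \sigma\not\subset K,\ \sigma\not\subset L\bigr\},
\]
followed by an induction over the cubes of the tiling, in the spirit of \cite[Sec.~3]{HST}. The genuinely new technical step relative to \cite{HST} is to push this estimate from rectangular windows to arbitrary convex averaging nets, for which one must control the cubes intersecting $\partial A_n$ and show their contribution is negligible in the limit; this is exactly where the convex-averaging hypothesis (iii) enters decisively.
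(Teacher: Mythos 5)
Your mean-convergence argument tracks the paper's proof closely: tile $A_n$ by cubes of side $L$, compare $\beta_{q,A_n}^{r,s}$ with the sum of cubewise persistent Betti numbers via a nested-filtration estimate (the defect is bounded by counts of $q$- and $(q{+}1)$-simplices near cube walls and near $\partial A_n$, thanks to the diameter bound coming from (K3)), bound expected simplex counts linearly in volume using the all-finite-moments hypothesis, and use the convex-averaging facts $\ell(\partial A_n^{h})/\ell(A_n)\to 0$ and $\ell(A_n\setminus \underline{A}_n^{(L)})/\ell(A_n)\to 0$ to kill the boundary contribution; your Cauchy-in-$L$ argument for the existence of $\hat\beta_q^{r,s}$ is a legitimate substitute for the paper's citation of \cite{HST}. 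So the first assertion is sound in outline, and the architecture is essentially the paper's.

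The almost-sure part has a genuine gap. You dispose of the additivity error $\mathcal{E}_n(L)$ by ``moment control plus Borel--Cantelli'', but the index set is an arbitrary linearly ordered net, not a sequence, so Borel--Cantelli is not applicable; and even for sequences you only have the first-moment bound $\E[\mathcal{E}_n(L)] = O(L^{-1}\ell(A_n)+\ell(\partial A_n^{\rho(s)}))$, which after normalization by $\ell(A_n)$ is small (of order $L^{-1}$) but not summable in $n$, so an a.s.\ conclusion would require concentration or variance estimates you have not established. The paper avoids this entirely: both error contributions are themselves handled by the ergodic theorem --- the wall term $\sum_{z} S_j(\Phi_{\partial\Lambda_M^{2\rho(s)}+z},s)$ is a lattice sum of a fixed translation-covariant functional, hence converges a.s.\ after normalization to $M^{-d}\E[S_j(\Phi_{\partial\Lambda_M^{2\rho(s)}},s)]$, which is $O(1/M)$ by the volume bound, while the term $S_j(\Phi_{A_n\setminus\underline{A}_n^{(M)}},s)$ is controlled by a superadditivity sandwich (Lemma \ref{lem:conv of Sq}) showing that $\ell(A_n)^{-1}S_j(\Phi_{A_n},s)$ and $\ell(A_n)^{-1}S_j(\Phi_{\underline{A}_n^{(M)}},s)$ share the same a.s.\ limit. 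A second, smaller issue: applying an ergodic theorem to the cubewise sums means invoking the sub-action of the lattice $M\Z^{d}$, and ergodicity of $\Phi$ under all $\R^{d}$-translations does not imply ergodicity under $M\Z^{d}$; the paper applies the Nguyen--Zessin spatial ergodic theorem along convex averaging nets and then uses the Pugh--Shub rotation trick to make the lattice-invariant $\sigma$-field trivial, whereas your blanket citation of \cite{DV2} does not address this point (nor whether a statement there covers arbitrary convex averaging nets for marked processes). Repairing these two points essentially turns your argument into the paper's proof.
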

The proofs of Theorem \ref{th:LLN for PDs} and Theorem \ref{th:LLN for PBNs}
will be given in the next Section.
They are shown in the same way as \cite[Theorem $1.5$ and Theorem $1.11$]{HST}, in which such limit theorems for stationary (unmarked) point processes were proved.

\section{Proof of Theorems \ref{th:LLN for PDs} and \ref{th:LLN for PBNs}}\label{sec:proofs}

The aim of this section is to prove Theorem \ref{th:LLN for PDs} and Theorem
\ref{th:LLN for PBNs}.

\subsection{Convergence of persistent Betti numbers}\label{sec:convergence of PBNs}
Let $M, h$ be positive numbers and $A\in \cB(\R^{d})$. We put
\[
  \begin{aligned}
    \Lambda_{M}                 & =[-M/2, M/2)^{d},                           \\
    \underline{A}^{(M)}
                                & =\bigsqcup\{\Lambda_{M}+z\,:\,z\in M\Z^{d}
    \text{ and } (\Lambda_{M}+z)\subset A\},                                  \\
    \overline{A}^{(M)}
                                & =\bigsqcup\{\Lambda_{M}+z\,:\,z\in M\Z^{d}
    \text{ and } (\Lambda_{M}+z)\cap A\neq\emptyset\},                        \\
    \text{ and } \partial A^{h} & =\{x\in \R^{d}\,:\,d(x, \partial A)\le h\},
  \end{aligned}
\]
where
$M\Z^{d}=\{Mz\,:\,z\in \Z^{d}\}$ and
$\displaystyle{d(x, \partial A)=\inf_{y\in \partial A}|x-y|}$.
Fundamental results treated in this paper for
convex averaging nets are summarized in the next proposition.
\begin{proposition}\label{prop:lim for convex sets}
  Let $\cA=\{A_{n}\}_{n\in \cN}$ be a convex averaging
  net in $\R^{d}$. Then for any $M>0$ and $h>0$, as $n\to \infty$,
  \begin{equation}\label{prop:lim for convex sets1}
    \frac{\ell(\underA_n^{(M)})}{\ell(A_{n})}\to 1,
    \quad \frac{\ell(\overA_n^{(M)})
      \setminus\underA_{n}^{(M)})}{\ell(A_{n})}\to 0,
  \end{equation}
  and
  \begin{equation}\label{prop:lim for convex sets2}
    \frac{\ell(\partial A_{n}^{h})}{\ell(A_{n})}\to 0.
  \end{equation}
\end{proposition}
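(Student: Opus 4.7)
The plan is to prove the boundary-layer estimate \eqref{prop:lim for convex sets2} first and then deduce \eqref{prop:lim for convex sets1} from it. Note first that the monotonicity of $\cA$ together with $\sup_n r(A_n)=\infty$ forces $r(A_n)\to\infty$ along the net, so $\ell(A_n)\ge\omega_d\,r(A_n)^d\to\infty$, where $\omega_d$ is the volume of the unit Euclidean ball. Thus every denominator blows up, and it suffices to control the numerators.

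For \eqref{prop:lim for convex sets2} I would use a convex-geometric sandwich between $A_n$, its outer parallel set $A_n\oplus\overline{B}_h(0)$, and its inner parallel set $(A_n)_{-h}:=\{x\in\R^d:\overline{B}_h(x)\subset A_n\}$. Fix $h>0$ and take $n$ large enough that $r:=r(A_n)>h$, and translate $A_n$ so that it contains $\overline{B}_r(0)$. The inclusion $\overline{B}_h=(h/r)\overline{B}_r\subset(h/r)A_n$ together with convexity of $A_n$ then yields
\begin{equation*}
  (1-h/r)A_n\subset(A_n)_{-h}\subset A_n\subset A_n\oplus\overline{B}_h(0)\subset(1+h/r)A_n,
\end{equation*}
where the two outer inclusions are verified by rewriting the relevant point as a convex combination of elements of $A_n$. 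Taking Lebesgue measure and using the dilation property gives
\begin{equation*}
  \ell\bigl(A_n\oplus\overline{B}_h(0)\bigr)-\ell\bigl((A_n)_{-h}\bigr)\le\bigl[(1+h/r)^d-(1-h/r)^d\bigr]\,\ell(A_n).
\end{equation*}
Since $\partial A_n^{h}\subset(A_n\oplus\overline{B}_h(0))\setminus(A_n)_{-h}$ and the bracketed quantity tends to $0$ as $r=r(A_n)\to\infty$, this yields \eqref{prop:lim for convex sets2}.

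For \eqref{prop:lim for convex sets1} I would argue that any cube $\Lambda_M+z$ contributing to $\overline{A}_n^{(M)}\setminus\underline{A}_n^{(M)}$ meets both $A_n$ and its complement, so by connectedness it contains a point of $\partial A_n$; since such a cube has diameter $M\sqrt{d}$, it is then entirely contained in $\partial A_n^{M\sqrt{d}}$. This gives the inclusion $\overline{A}_n^{(M)}\setminus\underline{A}_n^{(M)}\subset\partial A_n^{M\sqrt{d}}$, so the second limit of \eqref{prop:lim for convex sets1} follows from \eqref{prop:lim for convex sets2}, and the first then follows from $A_n\setminus\underline{A}_n^{(M)}\subset\overline{A}_n^{(M)}\setminus\underline{A}_n^{(M)}$. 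The main obstacle is really the convex sandwich in the middle paragraph; once the dilation inclusions $(1\pm h/r)A\supset A_{\mp h}$ are verified, the rest reduces to a routine comparison between cubes and boundary tubes, with no need for Steiner-type formulas or any isoperimetric machinery.
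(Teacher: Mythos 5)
Your proof is correct in substance, but it takes a genuinely different route from the paper, which does not prove the proposition at all: it simply cites \cite[Lemma 3.1]{NZ1} for the general statement, and \cite[Lemma 1]{F}, \cite[Lemma 2]{NZ2} for \eqref{prop:lim for convex sets1} and \eqref{prop:lim for convex sets2}. You instead give a self-contained convex-geometry argument: the dilation sandwich $(1-h/r)A_n\subset (A_n)_{-h}\subset A_n\subset A_n\oplus\overline{B}_h(0)\subset (1+h/r)A_n$ (valid once $A_n\supset\overline{B}_r(0)$, by exactly the convex-combination computations you indicate), which bounds the volume of the $h$-neighbourhood of $\partial A_n$ by $[(1+h/r)^d-(1-h/r)^d]\,\ell(A_n)$ with $r=r(A_n)\to\infty$, and then the observation that a grid cube meeting both $A_n$ and $\R^d\setminus A_n$ must meet $\partial A_n$ (connectedness) and hence lies in $\partial A_n^{M\sqrt{d}}$, which reduces \eqref{prop:lim for convex sets1} to \eqref{prop:lim for convex sets2}. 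Two harmless imprecisions should be patched in a written version. First, $r(A_n)$ is a supremum and need not be attained by a closed ball inside the (not necessarily closed) convex set $A_n$; run the sandwich with any radius $r'\in(h,r(A_n))$, or with $\overline{A_n}$ in place of $A_n$ (same Lebesgue measure), which also repairs the fact that the nearest boundary point used for $\partial A_n^h\subset A_n\oplus\overline{B}_h(0)$ lies only in $\overline{A_n}$. Second, the inclusion $\partial A_n^h\subset (A_n\oplus\overline{B}_h(0))\setminus (A_n)_{-h}$ can fail on the level set $\{x: d(x,\partial A_n)=h\}$: for $A_n=\overline{B}_{2h}(0)$ and $\norm{x}=h$ one has $d(x,\partial A_n)=h$ yet $\overline{B}_h(x)\subset A_n$. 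The exceptional points lie on the boundary of the convex set $(A_n)_{-h}$, hence form a Lebesgue-null set, so your volume bound survives; alternatively replace $(A_n)_{-h}$ by $(A_n)_{-2h}$, for which the inclusion is clean and the same estimate still tends to $0$. What your route buys is a short, elementary, fully self-contained proof that works verbatim for convex averaging nets; what the paper's citations buy is brevity and consistency with the Nguyen--Zessin ergodic framework already invoked for Proposition \ref{prop:ergodic th}.
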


Proposition \ref{prop:lim for convex sets} is a special case of \cite[Lemma 3.1]{NZ1}.
For (\ref{prop:lim for convex sets1}) and (\ref{prop:lim for convex sets2}),
see \cite[Lemma 1]{F} and \cite[Lemma 2]{NZ2},
respectively.

Next we need a version of the multi-dimensional
ergodic theorem for stationary ergodic marked point processes.

\begin{proposition}\label{prop:ergodic th}
  Let $\Phi$ be a stationary ergodic marked point process and $Z\in L^{p}(\P\circ \Phi^{-1})$ for $1\le p<+\infty$. If $\cA=\{A_{n}\}_{n\in \cN}$
  is a convex averaging net, then for each $M>0$
  \[
    \begin{aligned}
      \lim_{n\in \cN}\frac{1}{\ell(A_{n})}\sum_{z\in M\Z^{d}\cap A_{n}}Z(T_{-z\ast}\Phi)
       & =\frac{1}{M^{d}}\E[Z(\Phi)] \quad \text{ a.s. } \\
      \text{ and } \quad
      \lim_{n\in \cN}\frac{1}{\ell(A_{n})}\sum_{z\in M\Z^{d}\cap \underA_n^{(M)}}Z(T_{-z\ast}\Phi)
       & =\frac{1}{M^{d}}\E[Z(\Phi)] \quad \text{ a.s. }
    \end{aligned}
  \]
\end{proposition}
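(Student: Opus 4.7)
The proposition is a multi-dimensional pointwise ergodic theorem for the discrete action of $M\Z^{d}$ on the canonical probability space of $\Phi$, evaluated along a convex averaging net of windows in $\R^{d}$ rather than along F{\o}lner cubes in $\Z^{d}$. My plan proceeds in three steps.

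The first step is to upgrade the $\R^{d}$-ergodicity of $\Phi$ to ergodicity of the restricted action of the cocompact lattice $M\Z^{d}$ on $(\cN^{\#}_{\R^{d}\times\M},\P\circ\Phi^{-1})$. Since the translation action is jointly continuous on the Polish space $(\cN^{\#}_{\R^{d}\times\M},w^{\#})$, the function $a\mapsto \P(\Phi\in T_{a\ast}B\,\Delta\, B)$ is continuous on $\R^{d}$ for every Borel set $B\subset \cN^{\#}_{\R^{d}\times\M}$. If $B$ is $M\Z^{d}$-invariant modulo null sets, this function vanishes on the dense subset $M\Z^{d}$ and hence identically, and the $\R^{d}$-ergodicity of $\Phi$ then forces $\P(\Phi\in B)\in\{0,1\}$.

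The second step is to apply the multi-parameter pointwise ergodic theorem of Wiener--Tempelman type for ergodic $\Z^{d}$-actions along convex averaging sequences. After rescaling the lattice by $z\mapsto M^{-1}z$, the windows $B_{n}=M^{-1}A_{n}\cap\Z^{d}$ form a convex averaging net in $\Z^{d}$, so for $Z\in L^{p}$ one obtains
\[
  \frac{1}{\#(M\Z^{d}\cap A_{n})}\sum_{z\in M\Z^{d}\cap A_{n}}Z(T_{-z\ast}\Phi)\longrightarrow \E[Z(\Phi)] \quad \text{a.s.}
\]
Combined with the lattice-point count $\#(M\Z^{d}\cap A_{n})/\ell(A_{n})\to M^{-d}$, which follows from the sandwich $\underA_{n}^{(M)}\subset A_{n}\subset \overA_{n}^{(M)}$, the identity $M^{d}\#(M\Z^{d}\cap U)=\ell(U)$ for any $M$-aligned union of cubes $U$, and the first part of Proposition~\ref{prop:lim for convex sets}, this proves the first displayed limit. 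For the second limit, the two sums differ only on the lattice points in the annulus $\overA_{n}^{(M)}\setminus \underA_{n}^{(M)}$, of cardinality $o(\ell(A_{n})/M^{d})$ by Proposition~\ref{prop:lim for convex sets}; a maximal-inequality argument controls the resulting error as $o(\ell(A_{n}))$ almost surely.

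\textbf{Main obstacle.} The most delicate point is the first step: transferring ergodicity from the continuous $\R^{d}$-action to the discrete $M\Z^{d}$-action. This ultimately rests on the $L^{2}$-continuity of the translation action on indicators of Borel sets in $\cN^{\#}_{\R^{d}\times\M}$, which must be verified uniformly for marks taking values in a general complete separable metric space $\M$; the $w^{\#}$-topology is precisely what makes this uniform. The remaining steps are classical once the lattice action is known to be ergodic.
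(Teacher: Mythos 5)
Your Step 1 is where the argument breaks, and it cannot be repaired along the lines you suggest. You claim that an $M\Z^{d}$-invariant (mod null) set $B$ satisfies $\P(\Phi\in T_{a\ast}B\,\Delta\,B)=0$ for all $a\in\R^{d}$ because this function of $a$ is continuous and ``vanishes on the dense subset $M\Z^{d}$''; but $M\Z^{d}$ is a discrete, closed subgroup of $\R^{d}$, not a dense one, so continuity gives you nothing. More importantly, the statement you are trying to prove in Step 1 is false in general: ergodicity of the $\R^{d}$-action does not pass to the lattice sub-action. Take $\Phi=\sum_{k\in\Z^{d}}\delta_{(k+U,\,m_{0})}$ with $U$ uniform on $[0,1)^{d}$ and a fixed mark $m_{0}\in\M$; this marked point process is stationary and ergodic under $\R^{d}$-translations, yet $T_{-z\ast}\Phi=\Phi$ for every $z\in\Z^{d}$, so for $M=1$ the averages $\ell(A_{n})^{-1}\sum_{z\in\Z^{d}\cap A_{n}}Z(T_{-z\ast}\Phi)$ converge to $Z(\Phi)$, which is not a.s.\ equal to $\E[Z(\Phi)]$ unless $Z$ is essentially constant. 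In other words, the $M\Z^{d}$-invariant $\sigma$-field $\cI$ can be strictly larger than the trivial one, and some additional input beyond $\R^{d}$-ergodicity is needed to dispose of it.

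The paper's proof faces exactly this issue and resolves it differently: it applies the Nguyen--Zessin spatial ergodic theorem \cite[Theorem 3.7 and Corollary 3.10]{NZ1} directly to the translations $\{T_{z\ast}\}_{z\in M\Z^{d}}$ on the probability space $(\cN^{\#}_{\R^{d}\times\M},\cB(\cN^{\#}_{\R^{d}\times\M}),\P\circ\Phi^{-1})$, which yields both displayed limits at once (already normalized by $\ell(A_{n})$, for the sums over $M\Z^{d}\cap A_{n}$ and over $M\Z^{d}\cap\underA_{n}^{(M)}$), but with limit $M^{-d}\E[Z(\Phi)\mid\Phi^{-1}\cI]$; it then invokes Pugh--Shub \cite[Theorem 1]{PS} to argue that, after rotating the lattice $M\Z^{d}$ if necessary, the invariant $\sigma$-field $\cI$ of the lattice action may be assumed trivial, so the conditional expectation equals $\E[Z(\Phi)]$. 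This ``rotate the lattice'' step (or some substitute for it) is precisely the idea missing from your plan; no continuity or density argument can yield ergodicity of the $M\Z^{d}$-action from ergodicity of the $\R^{d}$-action. Your Step 2 is fine in spirit --- the Tempelman-type theorem, the lattice-point count $\#(M\Z^{d}\cap A_{n})/\ell(A_{n})\to M^{-d}$, and the comparison of the two sums via Proposition~\ref{prop:lim for convex sets} are essentially what \cite{NZ1} packages for you --- but on its own it only delivers convergence to the conditional expectation with respect to $\cI$, not to the constant $\E[Z(\Phi)]$.
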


\begin{proof}
  Take any $M>0$. Applying \cite[Theorem 3.7 and Corollary 3.10]{NZ1} to the
  probability space $(\cN_{X}^{\#}, \cB(\cN_{X}^{\#}), \P\circ\Phi^{-1})$ and the translations $\{T_{z\ast}\}_{z\in M\Z^{d}}$, we have
  \[
    \begin{aligned}
      \lim_{n\in \cN}\frac{1}{\ell(A_{n})}\sum_{z\in M\Z^{d}\cap A_{n}}Z(T_{-z\ast}\Phi)
       & =\frac{1}{M^{d}}\E[Z(\Phi)|\Phi^{-1}\cI] \quad \text{ a.s. }  \\
      \text{ and } \quad
      \lim_{n\in \cN}\frac{1}{\ell(A_{n})}\sum_{z\in M\Z^{d}\cap \underA_n^{(M)}}Z(T_{-z\ast}\Phi)
       & =\frac{1}{M^{d}}\E[Z(\Phi)|\Phi^{-1}\cI] \quad \text{ a.s., }
    \end{aligned}
  \]
  where $\cI$ is the invariant $\sigma$-field in $\cN_{\R^{d}
      \times\M}^{\#}$ under the translations $\{T_{z\ast}\}_{z\in M\Z^{d}}$.
  Rotating $M\Z^{d}$ if necessary,
  we may assume $\cI$ is trivial, that is, for every
  $I\in \cI$, $\P\circ\Phi^{-1}(I)=0$ or $1$ (see \cite[Theorem 1]{PS}). Therefore we have
  $\E[Z(\Phi)|\Phi^{-1}\cI]=\E[Z(\Phi)]$ a.s.
\end{proof}

Let $S_{q}(\Xi, t)$ be the number of
$q$-simplices in $\K(\Xi, t)$ for a
simple marked point set $\Xi$.
The following limit theorems for $S_{q}(\Phi, t)$ play important roles
in the proof of Theorem \ref{th:LLN for PBNs}.

\begin{lemma}\label{lem:conv of Sq}
  Let $\Phi$ be a stationary ergodic marked point process and suppose
  its ground process $\Phi_{g}$ has all finite moments.
  Then for any nonnegative integer $q$,
  $t\ge 0$, $M>0$, and convex averaging net $\cA=\{A_{n}\}_{n\in \cN}$,
  \[
    \lim_{n\in \cN}\frac{1}{\ell(A_{n})}S_{q}(\Phi_{\underA_n^{(M)}}, t)
    =\lim_{n\in
      \cN}\frac{1}{\ell(A_{n})}S_{q}(\Phi_{\overA_n^{(M)}}, t)
    =\lim_{n\in \cN}\frac{1}{\ell(A_{n})}S_{q}(\Phi_{A_{n}}, t) \text{ a.s. }
  \]
\end{lemma}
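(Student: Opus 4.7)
My plan is to squeeze $S_q(\Phi_{A_n},t)$ between sums of translates of a single $L^1$-functional on $\cN^{\#}_{\R^d\times\M}$ and then invoke Proposition \ref{prop:ergodic th}. Since $\Xi\mapsto S_q(\Xi,t)$ is monotone in the point set and $\underA_n^{(M)}\subset A_n\subset \overA_n^{(M)}$, one trivially has
\[
S_q(\Phi_{\underA_n^{(M)}},t)\le S_q(\Phi_{A_n},t)\le S_q(\Phi_{\overA_n^{(M)}},t),
\]
so it suffices to show the two outer averages share a common almost-sure limit.

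To produce the required $L^1$-functional I would fix a translation-invariant total order on $\R^d$ (say lexicographic) and, for every finite $\sigma\subset\R^d$, call its minimum element its \emph{root}. Set
\[
W(\Phi)=\#\{\sigma:\sigma\text{ is a $q$-simplex of }K(\Phi,t)\text{ whose root lies in }\Lambda_M\}.
\]
Because lexicographic minimum commutes with translations, (K2) gives that $W(T_{-z\ast}\Phi)$ counts $q$-simplices of $K(\Phi,t)$ whose root lies in $\Lambda_M+z$. The diameter bound $\diam\pi(\sigma)\le\rho(t)$ from (K1) and (K3) yields $W(\Phi)\le\binom{\Phi_g((\Lambda_M)^{\rho(t)})}{q+1}$, and hence $W\in L^p$ for every $p\ge 1$ by the finite-moment hypothesis.

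Grouping simplices by the lattice cell containing their root and using the diameter bound once more, I would obtain
\[
\sum_{z\in V_n}W(T_{-z\ast}\Phi)\le S_q(\Phi_{\underA_n^{(M)}},t),\qquad S_q(\Phi_{\overA_n^{(M)}},t)\le\sum_{z\in M\Z^d\cap \overA_n^{(M)}}W(T_{-z\ast}\Phi),
\]
where $V_n:=\{z\in M\Z^d:(\Lambda_M+z)^{\rho(t)}\subset\underA_n^{(M)}\}$. For $z\in V_n$ the diameter bound forces every vertex of a rooted simplex to lie in $\underA_n^{(M)}$, not merely the root; for the upper bound we simply forget the restriction on non-root vertices.

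For the final step I would pick $h:=2(M\sqrt d+\rho(t))$ and observe the inclusions
\[
M\Z^d\cap A_n^{(h)}\subset V_n\subset M\Z^d\cap A_n\subset M\Z^d\cap \overA_n^{(M)}\subset M\Z^d\cap A_n^{[h]},
\]
where $A_n^{(h)}$ and $A_n^{[h]}$ denote the inner and outer parallel sets of $A_n$ at distance $h$. Both $\{A_n^{(h)}\}$ and $\{A_n^{[h]}\}$ are again convex averaging nets, and Proposition \ref{prop:lim for convex sets} (in particular \eqref{prop:lim for convex sets2}) yields $\ell(A_n^{(h)})/\ell(A_n),\ell(A_n^{[h]})/\ell(A_n)\to 1$. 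Applying Proposition \ref{prop:ergodic th} with $Z=W$ to each of the three nets $\{A_n^{(h)}\}$, $\{A_n\}$, $\{A_n^{[h]}\}$ produces the same almost-sure limit $\E[W(\Phi)]/M^d$, and the sandwiches above then force all three averages in the lemma to converge almost surely to this common value. The main technical obstacle will be this parallel-set bookkeeping: verifying that the modified sequences are genuinely convex averaging nets and that the differences of the various lattice-point sums become negligible after dividing by $\ell(A_n)$, so that Proposition \ref{prop:ergodic th} applies cleanly at every step.
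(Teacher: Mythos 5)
Your proposal is correct and takes essentially the same route as the paper: there, too, $S_{q}(\Phi_{\underA_n^{(M)}},t)\le S_{q}(\Phi_{A_n},t)\le S_{q}(\Phi_{\overA_n^{(M)}},t)$ is squeezed between lattice sums of translates of a single integrable per-cube functional, and Propositions \ref{prop:lim for convex sets} and \ref{prop:ergodic th} are applied after noting that the enlarged/shrunken index regions (the paper uses $A_n\setminus\partial A_n^{\rho(t)+2\sqrt{d}M}$ and $A_n\cup\partial A_n^{\sqrt{d}M}$ in place of your parallel sets) still behave like convex averaging nets up to negligible boundary terms. The only cosmetic difference is the tie-breaking device: the paper counts each $q$-simplex fractionally, with weight $\tfrac{1}{q+1}$ per vertex lying in $\Lambda_M$, whereas you assign each simplex to the cell containing its lexicographic root.
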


\begin{proof}
  The proof is similar to that of \cite[Lemma 3.2]{YSA}.
  Consider the function defined by
  \[
    h_{q, t}^{(M)}(\Phi)
    =\frac{1}{q+1}\sum_{x\in \Phi_{g}\cap\Lambda_{M}}
    \#\{q \text{-simplices in } \K(\Phi, t)
    \text{ containing } x\}.
  \]
  We recall that $\diam \sigma_{g}\le \rho(t)$ for every $\sigma_{g}\in \K(\Phi_{A_{n}}, t)$. Hence we obtain
  \[
    \begin{aligned}
      \sum_{z\in M\Z^{d}\cap
        (A_{n}\setminus \partial A_{n}^{\rho(t)+2\sqrt{d}M})}h_{q, t}^{(M)}(T_{-z \ast}\Phi)
       & \le S_{q}(\Phi_{\underA_n^{(M)}}, t)
      \le S_{q}(\Phi_{A_{n}}, t)              \\
       & \le S_{q}(\Phi_{\overA_n^{(M)}}, t)
      \le \sum_{z\in M\Z^{d}\cap
        (A_{n}\cup \partial A_{n}^{\sqrt{d}M})}h_{q, t}^{(M)}(T_{-z \ast}\Phi).
    \end{aligned}
  \]
  Since the ground process $\Phi_{g}$ of $\Phi$ has all finite moment,
  we have $\E[h_{q, t}^{(M)}(\Phi)]\le
    \E[\Phi_{g}(\Lambda_{M}\cup \partial \Lambda_{M}^{\rho(t)})^{q+1}]<+\infty$. If we notice the fact that
  $\{A_{n}\setminus \partial A_{n}^{\rho(t)+2\sqrt{d}M}\}_{n\in \cN}$ is also
  a convex averaging net, we see from Proposition \ref{prop:lim for convex sets} and Proposition \ref{prop:ergodic th} that
  \[
    \begin{aligned}
       &
      \hspace{12pt}\frac{1}{\ell(A_{n})}\sum_{z\in
      M\Z^{d}\cap (A_{n}\setminus \partial A_{n}^{\rho(t)+2\sqrt{d}M})}h_{q, t}^{(M)}(T_{-z \ast}\Phi)                                                 \\
       & =\frac{\ell(A_{n}\setminus
        \partial A_{n}^{\rho(t)+2\sqrt{d}M})}{\ell(A_{n})}
      \cdot\frac{1}{\ell(A_{n}\setminus
      \partial A_{n}^{\rho(t)+2\sqrt{d}M})}\sum_{z\in M\Z^{d}\cap (A_{n}\setminus \partial A_{n}^{\rho(t)+2\sqrt{d}M})}h_{q, t}^{(M)}(T_{-z \ast}\Phi) \\
       & \to \frac{1}{M^{d}}\E[h_{q, t}^{(M)}(\Phi)]
      \text{ as } n \to \infty \text{ a.s. }
    \end{aligned}
  \]
  We can similarly show that
  \[
    \frac{1}{\ell(A_{n})}\sum_{z\in M\Z^{d}\cap
      (A_{n}\cup \partial A_{n}^{\sqrt{d}M})}h_{q, t}^{(M)}(T_{-z \ast}\Phi)\to \frac{1}{M^{d}}\E[h_{q, t}^{(M)}(\Phi)]
    \text{ as } n\to \infty \text{ a.s. }
  \]
  Therefore we reach the desired result.
\end{proof}

Now we state a basic estimate on the persistent
Betti numbers for nested filtered complexes $\K^{(1)}\subset \K^{(2)}$.
The proof of the following lemma is given in \cite[Lemma 2.11]{HST}.

\begin{lemma}\label{lem:estimates for PBNs}
  Let $\K^{(1)}=\{K^{(1)}_{t}\}_{t\ge 0}$ and
  $\K^{(2)}=\{K^{(2)}_{t}\}_{t\ge 0}$ be filtered complexes with $K^{(1)}_{t}\subset K^{(2)}_{t}$ for $t\ge 0$. Then
  \[
    |\beta_{q}^{r, s}(\K^{(1)})-\beta_{q}^{r, s}(\K^{(2)})|
    \le \sum_{j=q, q+1}\#K^{(2)}_{s, j}\setminus K^{(1)}_{s, j}
    +\#\{\sigma\in K^{(1)}_{s, j}\setminus K^{(1)}_{r, j}
    \,:\,t^{(2)}_{\sigma}\le r\},
  \]
  where $K^{(i)}_{s, j}$ is the set of $j$-simplices in $K^{(i)}_{s}$, and $t^{(i)}_{\sigma}$ is the birth time
  of $\sigma$ in $\K^{(i)}$, $i=1,2$. In particular,
  if $t^{(1)}_{\sigma}=t^{(2)}_{\sigma}$ holds for any simplex $\sigma$ in $\K^{(1)}$,
  then
  \[
    |\beta_{q}^{r, s}(\K^{(1)})-\beta_{q}^{r, s}(\K^{(2)})|
    \le \sum_{j=q, q+1}\#K^{(2)}_{s, j}\setminus K^{(1)}_{s, j}.
  \]
\end{lemma}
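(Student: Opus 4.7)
The plan is to prove the stated bound by interpolating between $\K^{(1)}$ and $\K^{(2)}$ through a carefully chosen intermediate filtration and repeatedly applying a one-simplex stability estimate. The building block is: if a filtration $\K'$ is obtained from $\K$ by adding a single $j$-simplex $\sigma$ (with birth times on every other simplex unchanged), then $|\beta_q^{r,s}(\K) - \beta_q^{r,s}(\K')| \le 1$, and the difference vanishes unless $j \in \{q, q+1\}$. This follows from the explicit formula $\beta_q^{r,s}(\K) = \dim Z_q(K_r) - \dim(Z_q(K_r) \cap B_q(K_s))$: adding a $q$-simplex changes $\dim Z_q(K_r)$ by at most one and leaves $\dim B_q(K_s)$ invariant, while adding a $(q+1)$-simplex affects only $\dim B_q(K_s)$ by at most one, so in each case both terms of the formula, and hence their difference, change by at most one.

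To handle the general situation, introduce the intermediate filtration $\tilde{\K}=\{\tilde{K}_t\}_{t\ge 0}$ defined by $\tilde{K}_t = \{\sigma \in K^{(1)} : t^{(2)}_\sigma \le t\}$. The nesting $K^{(1)}_t \subseteq K^{(2)}_t$ forces $t^{(2)}_\sigma \le t^{(1)}_\sigma$ for every $\sigma \in K^{(1)}$, hence $K^{(1)}_t \subseteq \tilde{K}_t \subseteq K^{(2)}_t$, and each $\tilde{K}_t$ is a simplicial complex because $t^{(2)}_\tau \le t^{(2)}_\sigma$ whenever $\tau$ is a face of $\sigma$. For the pair $(\tilde{\K}, \K^{(2)})$, the birth times of common simplices coincide with those inherited from $\K^{(2)}$, so ordering the added simplices $K^{(2)} \setminus \tilde{K}$ by a linear extension of the face partial order and inserting them one by one, the one-simplex stability gives
\[
|\beta_q^{r,s}(\tilde{\K}) - \beta_q^{r,s}(\K^{(2)})| \le \sum_{j=q,q+1} \#(K^{(2)}_{s,j} \setminus \tilde{K}_{s,j}) \le \sum_{j=q,q+1} \#(K^{(2)}_{s,j} \setminus K^{(1)}_{s,j}),
\]
using $K^{(1)}_s \subseteq \tilde{K}_s$ in the second inequality.

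For the pair $(\K^{(1)}, \tilde{\K})$, the underlying simplex set is the same but birth times are shifted earlier. Ordering the simplices of $K^{(1)}$ so that faces precede cofaces and shifting each birth time from $t^{(1)}_\sigma$ to $t^{(2)}_\sigma$ one simplex at a time, every intermediate object remains a valid simplicial filtration by the face-time monotonicity of $t^{(2)}$. Each shift changes $\beta_q^{r,s}$ by at most one via the same mechanism as the one-simplex lemma, and contributes only when $j \in \{q, q+1\}$, $r < t^{(1)}_\sigma \le s$ (so $\sigma \in K^{(1)}_{s,j} \setminus K^{(1)}_{r,j}$), and $t^{(2)}_\sigma \le r$ (so the shift actually crosses $r$); this produces exactly the second term of the stated bound. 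Combining the two pieces with the triangle inequality applied to $\K^{(1)}, \tilde{\K}, \K^{(2)}$ finishes the proof, and the ``in particular'' clause is recovered because $\tilde{\K}=\K^{(1)}$ when birth times agree.

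The main obstacle is executing the one-simplex argument rigorously: each elementary step (addition or shift) must produce a valid simplicial filtration, which requires the face-compatible ordering above in both parts. A secondary subtlety is checking that the change in $\beta_q^{r,s}$ from a single shift is zero outside the specified range, i.e., that only simplices whose shift crosses the threshold $r$ (and not $s$, since simplices common to both filtrations have the same birth time at the terminal end) contribute; this is a straightforward but slightly intricate bookkeeping of which simplices lie in $K_r$ and $K_s$ before and after each elementary modification.
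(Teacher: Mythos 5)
Your overall strategy (interpolate through the intermediate filtration $\tilde{\K}$ with $\tilde{K}_t=\{\sigma\in K^{(1)}:t^{(2)}_\sigma\le t\}$ and do one elementary move at a time in a face-compatible order) is sound and is genuinely different from the argument the paper relies on — the paper does not prove the lemma itself but cites \cite[Lemma 2.11]{HST}, where the bound is obtained by a direct linear-algebra dimension count on $\beta_q^{r,s}=\dim Z_q(K_r)-\dim\bigl(Z_q(K_r)\cap B_q(K_s)\bigr)$, with no simplex-by-simplex induction. However, your accounting for the shift step contains a false claim. You assert that a shift of a single simplex $\sigma$ from $t^{(1)}_\sigma$ to $t^{(2)}_\sigma$ can change $\beta_q^{r,s}$ only when $r<t^{(1)}_\sigma\le s$ and $t^{(2)}_\sigma\le r$, i.e.\ only when the shift crosses $r$ while $\sigma\in K^{(1)}_{s,j}$; the parenthetical ``and not $s$'' is not justified and is wrong. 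A shift with $t^{(2)}_\sigma\le s<t^{(1)}_\sigma$ moves $\sigma$ into $K_s$ and, for $j=q+1$ (and for $j\in\{q,q+1\}$ when it also crosses $r$), can change $B_q(K_s)$ and hence $\beta_q^{r,s}$. Concretely, take the hollow triangle on vertices $a,b,c$ with all vertices and edges born at time $0$ in both filtrations, and the $2$-simplex born at $t^{(1)}=10$ in $\K^{(1)}$ and $t^{(2)}=0$ in $\K^{(2)}$; with $q=1$, $r=1$, $s=5$ one has $\beta_1^{1,5}(\K^{(1)})=1$ while $\beta_1^{1,5}(\tilde{\K})=0$, yet your criterion counts no contributing shift (since $t^{(1)}_\sigma>s$) and the lemma's second term is $0$. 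So the inequality you claim for the pair $(\K^{(1)},\tilde{\K})$, namely that it is bounded by the second term alone, is false.

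The gap is reparable without changing your route, but the repair requires redoing the bookkeeping rather than adding a remark. Keep the sharp bound $|\beta_q^{r,s}(\tilde{\K})-\beta_q^{r,s}(\K^{(2)})|\le\sum_{j=q,q+1}\#(K^{(2)}_{s,j}\setminus\tilde{K}_{s,j})$ for the addition step (do not relax $\tilde{K}_{s,j}$ to $K^{(1)}_{s,j}$ there), and observe that every effective shift is of a $j$-simplex, $j\in\{q,q+1\}$, with either $r<t^{(1)}_\sigma\le s$ and $t^{(2)}_\sigma\le r$ (these are counted by the second term of the lemma) or $t^{(2)}_\sigma\le s<t^{(1)}_\sigma$ (these are exactly the simplices of $\tilde{K}_{s,j}\setminus K^{(1)}_{s,j}$). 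Since $\#(K^{(2)}_{s,j}\setminus\tilde{K}_{s,j})+\#(\tilde{K}_{s,j}\setminus K^{(1)}_{s,j})=\#(K^{(2)}_{s,j}\setminus K^{(1)}_{s,j})$, the two steps together give precisely the stated bound; in my example the shift that your count missed is absorbed by the term $\#(\tilde{K}_{s,2}\setminus K^{(1)}_{s,2})=1$. A minor additional point: when you say ``both terms of the formula, and hence their difference, change by at most one,'' you should note that the two changes are monotone and coupled ($B_q(K_s)$ is fixed when a $q$-simplex is added, and $Z_q(K_r)$ is fixed when a $(q+1)$-simplex is added), since otherwise two terms each changing by one could move the difference by two.
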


Now we give the proof of Theorem \ref{th:LLN for PBNs}.

\textit{Proof of Theorem \ref{th:LLN for PBNs}}.
We first note that it can be proved that there exist $C_{q, t}\ge 0$ and
$\hat{\beta}_{q}^{r, s}\ge 0$ such that
\begin{equation}\label{inq:sq}
  \E[S_{q}(\Phi_{A}, t)]\le C_{q, t}\ell(A) \text{ for bounded } A\in \cB(\R^{d})
\end{equation}
and
\begin{equation}
  \lim_{M\to \infty}\frac{1}{M^{d}}
  \E[\beta_{q, \Lambda_{M}}^{r, s}]=\hat{\beta}_{q}^{r, s}
\end{equation}
in the same way as \cite[Theorem 1.11]{HST}.
Take $0\le r\le s<\infty$ and a nonnegative integer $q$ and fix them.
The set $\underA_n^{(M)}$ is decomposed into rectangles
\[
  \underA_n^{(M)}
  =\bigsqcup_{z\in M\Z^{d}\cap\underA_n^{(M)}}(\Lambda_{M}+z).
\]
We define a new filtered complex $\K^{\circ}(\Phi_{\underA_n^{(M)}})$ by
\[
  \K^{\circ}(\Phi_{\underA_n^{(M)}})
  =\bigsqcup_{z\in M\Z^{d}\cap\underA_n^{(M)}}
  \K(\Phi_{\Lambda_{M}+z}).
\]
From the second assertion in Lemma \ref{lem:estimates for PBNs}, we have
\begin{equation}\label{inq:PBN}
  |\beta_{q, A_{n}}^{r, s}-\beta_{q}^{r, s}(\K^{\circ}(\Phi_{\underA_n^{(M)}}))|
  \le \sum_{j=q, q+1}\left(\sum_{z\in
    M\Z^{d}\cap\underA_n^{(M)}}S_{j}(\Phi_{\partial
    \Lambda_{M}^{2\rho(s)}+z}, s)
  +S_{j}(\Phi_{A_{n}\setminus\underA_n^{(M)}}, s)\right).
\end{equation}
Since $\Phi$ is stationary and it is easy to see that
$\ell(\underA_n^{(M)})=\#\{M\Z^{d}\cap\underA_n^{(M)}\}\cdot M^{d}$, we have
\begin{equation}\label{est:1}
  \begin{aligned}
    \E[\beta_{q}^{r, s}(\K^{\circ}(\Phi_{\underA_n^{(M)}}))]
     & =\E\left[\sum_{z\in M\Z^{d}\cap\underA_n^{(M)}}\beta_{q, \Lambda_{M}+z}^{r, s}\right]
    =\#\{M\Z^{d}\cap\underA_n^{(M)}\}
    \cdot \E[\beta_{q, \Lambda_{M}}^{r, s}]                                                  \\
     & =\ell(\underA_n^{(M)})\cdot\frac{1}{M^{d}}
    \E[\beta_{q, \Lambda_{M}}^{r, s}].
  \end{aligned}
\end{equation}
In addition, we have
\begin{equation}\label{est:2}
  \begin{aligned}
    \E\left[\sum_{z\in
      M\Z^{d}\cap\underA_n^{(M)}}S_{j}(\Phi_{\partial \Lambda_{M}^{2\rho(s)}+z}, s)\right]
     & =\#\{M\Z^{d}\cap\underA_n^{(M)}\}
    \cdot \E[S_{j}(\Phi_{\partial \Lambda_{M}^{2\rho(s)}}, s)] \\
     & \le \ell(\underA_n^{(M)})
    \cdot \frac{C_{j, s}\ell(\partial \Lambda_{M}^{2\rho(s)})}{M^{d}}
  \end{aligned}
\end{equation}
and
\begin{equation}\label{est:3}
  \E[S_{j}(\Phi_{A_{n}\setminus\underA_n^{(M)}}, s)]\le C_{j, s}\ell(A_{n}\setminus\underA_n^{(M)})
\end{equation}
from (\ref{inq:sq}). Take $\varepsilon>0$. We can find $M>0$ such that
\begin{equation}\label{inq:M}
  \left|\frac{1}{M^{d}}
  \E[\beta_{q, \Lambda_{M}}^{r, s}]-\hat{\beta}_{q}^{r, s}\right|, \quad
  \sum_{j=q, q+1}\frac{C_{j ,s}\ell(\partial \Lambda_{M}^{2\rho(s)})}{M^{d}}<\varepsilon.
\end{equation}
By taking expectation on both sides of the inequality (\ref{inq:PBN}),
we see that the estimates (\ref{est:1}), (\ref{est:2}), and (\ref{est:3}) yield that
\[
  \begin{aligned}
     & \left|\frac{1}{\ell(A_{n})}\E[\beta_{q, A_{n}}^{r, s}]
    -\hat{\beta}_{q}^{r, s}\right|                                                         \\
     & \ \le\frac{1}{\ell(A_{n})}\E\left[\left|\beta_{q, A_{n}}^{r, s}
    -\beta_{q}^{r, s}(\K^{\circ}(\Phi_{\underA_n^{(M)}}))\right|\right]
    +\frac{\ell(\underA_n^{(M)})}{\ell(A_{n})}\left|\frac{1}{M^{d}}\E[\beta_{q, A_{n}}^{r, s}]
    -\hat{\beta}_{q}^{r, s}\right|                                                         \\
     & \ \ +\hat{\beta}_{q}^{r, s}\left|\frac{\ell(\underA_n^{(M)})}{\ell(A_{n})}-1\right| \\
     & \ \le \frac{\ell(\underA_n^{(M)})}{\ell(A_{n})}\varepsilon
    +\sum_{j=q, q+1}C_{j, s}\frac{\ell(A_{n}\setminus\underA_n^{(M)})}{\ell(A_{n})}+\frac{\ell(\underA_n^{(M)})}{\ell(A_{n})}\varepsilon+\hat{\beta}_{q}^{r, s}\left|\frac{\ell(\underA_n^{(M)})}{\ell(A_{n})}-1\right|.
  \end{aligned}
\]
Therefore we conclude that
\[
  \limsup_{n\in \cN}\left|\frac{1}{\ell(A_{n})}\E[\beta_{q, A_{n}}^{r, s}]
  -\hat{\beta}_{q}^{r, s}\right|\le 2\varepsilon.
\]
This implies the first assertion.

In order to prove the second assertion, we assume that $\Phi$ is
ergodic. By virtue of the multi-dimensional ergodic theorem mentioned in Proposition \ref{prop:ergodic th}, we see that
\begin{equation}\label{conv1}
  \begin{aligned}
    \frac{1}{\ell(A_{n})}\beta_{q}^{r, s}(\K^{\circ}(\Phi_{\underA_n^{(M)}}))
     & =\frac{1}{\ell(A_{n})}\sum_{z\in M\Z^{d}\cap\underA_n^{(M)}}\beta_{q, \Lambda_{M}+z}^{r, s} \\
     & =\frac{1}{\ell(A_{n})}\sum_{z\in M\Z^{d}\cap\underA_n^{(M)}}
    \beta_{q}^{r, s}(\K((T_{-z\ast}\Phi))_{\Lambda_{M}})
    \to \frac{1}{M^{d}}\E[\beta_{q, \Lambda_{M}}^{r, s}],
  \end{aligned}
\end{equation}
and
\begin{equation}\label{conv2}
  \begin{aligned}
    \frac{1}{\ell(A_{n})}\sum_{z\in
      M\Z^{d}\cap\underA_n^{(M)}}S_{j}(\Phi_{\partial \Lambda_{M}^{2\rho(s)}+z}, s)
     & =\frac{1}{\ell(A_{n})}\sum_{z\in
      M\Z^{d}\cap\underA_n^{(M)}}S_{j}((T_{-z\ast}\Phi)_{\partial
    \Lambda_{M}^{2\rho(s)}}, s)         \\
     & \to
    \frac{1}{M^{d}}\E[S_{j}(\Phi_{\partial \Lambda_{M}^{2\rho(s)}}, s)]
  \end{aligned}
\end{equation}
as $n\to \infty$ a.s.\ for any $M>0$. If we notice the fact that
$S_{j}(\Phi_{A}, s)+S_{j}(\Phi_{B}, s)\le
  S_{j}(\Phi_{A\cup B}, s)$ holds for disjoint bounded $A, B\in \cB(\R^{d})$, we see from Lemma \ref{lem:conv of Sq} that
\begin{equation}\label{conv3}
  \frac{1}{\ell(A_{n})}S_{j}(\Phi_{A_{n}\setminus\underA_n^{(M)}}, s)
  \le \left|\frac{1}{\ell(A_{n})}S_{j}(\Phi_{A_{n}}, s)
  -\frac{1}{\ell(A_{n})}S_{j}(\Phi_{\underA_n^{(M)}}, s)
  \right|\to 0
\end{equation}
as $n\to \infty$ a.s.\ for any $M>0$. Hence we can find $\Omega_{0}\in \cF$ with
$\P(\Omega_{0})=1$ such that for any $\omega\in \Omega_{0}$ and positive integer
$M$, the convergences (\ref{conv1}), (\ref{conv2}), and (\ref{conv3}) hold
as $n\to \infty$.
Take any $\omega\in \Omega_{0}$ and $\varepsilon>0$. If we choose a positive integer $M$ so that the inequalities (\ref{inq:M}) hold,
we have
\[
  \begin{aligned}
     & \left|\frac{1}{\ell(A_{n})}\beta_{q, A_{n}}^{r, s}(\omega)-\hat{\beta}_{q}^{r, s}\right|                                                                        \\
     & \ \le \frac{1}{\ell(A_{n})}\left|\beta_{q, A_{n}}^{r, s}(\omega)
    -\beta_{q}^{r, s}(\K^{\circ}(\Phi_{\underA_n^{(M)}}(\omega)))\right|
    +\left|\frac{1}{\ell(A_{n})}\beta_{q}^{r, s}(\K^{\circ}(\Phi_{\underA_n^{(M)}}(\omega)))-\frac{1}{M^{d}}\E[\beta_{q, \Lambda_{M}}^{r, s}]\right|                   \\
     & \ \ +\left|\frac{1}{M^{d}}\E[\beta_{q, \Lambda_{M}}^{r, s}]-\hat{\beta}_{q}^{r, s}\right|                                                                       \\
     & \le \sum_{j=q,
      q+1}\left(\frac{1}{\ell(A_{n})}\sum_{z\in
      M\Z^{d}\cap\underA_n^{(M)}}S_{j}(\Phi_{\partial \Lambda_{M}^{2\rho(s)}+z}(\omega), s)
    +\frac{1}{\ell(A_{n})}S_{j}(\Phi_{A_{n}\setminus\underA_n^{(M)}}(\omega), s)\right)                                                                                \\
     & \ +\left|\frac{1}{\ell(A_{n})}\beta_{q}^{r, s}(\K^{\circ}(\Phi_{\underA_n^{(M)}}(\omega)))-\frac{1}{M^{d}}\E[\beta_{q, \Lambda_{M}}^{r, s}]\right|+\varepsilon.
  \end{aligned}
\]
Consequently, we obtain
\[
  \begin{aligned}
    \limsup_{n\in \cN}\left|\frac{1}{\ell(A_{n})}\beta_{q, A_{n}}^{r, s}(\omega)-\hat{\beta}_{q}^{r, s}\right|
     & \le \sum_{j=q,
      q+1}\frac{1}{M^{d}}\E[S_{j}(\Phi_{\partial
    \Lambda_{M}^{2\rho(s)}}, s)]+\varepsilon                     \\
     & \le \sum_{j=q, q+1}\frac{C_{j
    ,s}\ell(\partial \Lambda_{M}^{2\rho(s)})}{M^{d}}+\varepsilon \\
     & \le 2\varepsilon,
  \end{aligned}
\]
which implies that the second assertion is valid.
The proof of Theorem \ref{th:LLN for PBNs} is now complete. \qed

\subsection{Convergence of persistence diagrams}\label{sec:convergence of PDs}

In this section we prove Theorem \ref{th:LLN for PDs}.
To this end, we make use of similar arguments which can be
found in the proof of the same kind of limit theorem for
persistence diagrams built over
stationary point process (Theorem 1.5 in \cite{HST}).
Let $X$ be a locally compact Hausdorff space with countable base
and $\cC$ the ring of all relatively compact sets in $X$.
A class $\cC^{\prime}\subset \cC$ is called a convergence-determining class
(for vague convergence) if for any $\mu\in \cM_{X}^{\#}$ and any sequence
$\{\mu_{n}\}\subset \cM_{X}^{\#}$, the condition
\[
  \mu_{n}(A)\to \mu(A) \text{ as } n\to \infty
  \text{ for all } A\in \cC^{\prime}\cap \cC_{\mu}
\]
implies the vague convergence $\mu_{n}\xrightarrow{v}\mu$, where
$\cC_{\mu}$ is the class of relatively compact continuity sets of $\mu$,
i.e., $\cC_{\mu}=\{B\in \cC\,:\,\mu(\partial B)=0\}$.
A class $\cC^{\prime}_{\mu}$ is called a convergence-determining class
for $\mu\in \cM_{X}^{\#}$ if for any sequence
$\{\mu_{n}\}\subset \cM_{X}^{\#}$, the condition
\[
  \mu_{n}(A)\to \mu(A) \text{ as } n\to \infty
  \text{ for all } A\in \cC^{\prime}_{\mu}
\]
implies the vague convergence $\mu_{n}\xrightarrow{v}\mu$.
We note that a class $\cC^{\prime}$ is a convergence-determining class
if and only if for any $\mu\in \cM_{X}^{\#}$,
$\cC^{\prime}\cap \cC_{\mu}$ is a convergence-determining class
for $\mu$.
A convergence-determining class $\cC^{\prime}$ has the finite covering property if
for any $B\in \cC$, $B$ is covered by a finite union of $\cC^{\prime}$-sets.
The next lemma can be proved in the same way as  Proposition 3.4 in \cite{HST}.

\begin{lemma}\label{key lemma1}
  Let $X$ be a locally compact Hausdorff space with countable base
  and $\cC^{\prime}$ a convergence-determining class with finite covering property.
  Suppose that for every $\mu\in \cM_{X}^{\#}$, $\cC^{\prime}$ contains a countable convergence-determining class for $\mu$. Let $\{\xi_{n}\}$ be
  a net of random measures on $X$ satisfying the following:
  \begin{itemize}
    \item[(i)]
          For every $n$, $\E[\xi_{n}]\in \cM_{X}^{\#}$.

    \item[(ii)]
          For every $A\in \cC^{\prime}$, there exists $c_{A}\ge 0$ such that
          $\E[\xi_{n}(A)]\to c_{A}$ as $n\to \infty$.
  \end{itemize}
  Then there exists a unique measure $\mu\in \cM_{X}^{\#}$ such that
  $\E[\xi_{n}]\xrightarrow{v}\mu$ as $n\to \infty$ and $\mu(A)=c_{A}$.
  Furthermore, if $\xi(A)\to c_{A}$ as $n\to \infty$ almost surely
  for any $A\in \cC^{\prime}$, then $\xi_{n}\xrightarrow{v} \mu$
  as $n\to \infty$ almost surely.
\end{lemma}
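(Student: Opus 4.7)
The plan is to combine hypothesis (ii) with the finite covering property to obtain vague relative compactness of $\{\E[\xi_n]\}$ in $\cM_X^\#$, then to identify any vague limit $\mu$ via the convergence-determining assumption, and finally to upgrade to the almost-sure statement by exploiting the countability of a determining class for this specific $\mu$.

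First I would use the finite covering property: any $B\in \cC$ is contained in a finite union $A_1\cup\cdots\cup A_k$ of sets from $\cC^\prime$, so $\E[\xi_n(B)]\le \sum_{i=1}^k \E[\xi_n(A_i)]$ is uniformly bounded in $n$ by hypothesis (ii). Since $X$ is locally compact Hausdorff with countable base, a uniform bound on mean masses of every relatively compact set yields vague relative compactness of $\{\E[\xi_n]\}$ in $\cM_X^\#$, and hence every subnet admits a further subnet converging vaguely to some $\mu\in \cM_X^\#$.

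To pin down the limit, suppose $\E[\xi_{n'}]\xrightarrow{v}\mu$ along such a subnet. Vague convergence gives $\E[\xi_{n'}](A)\to \mu(A)$ for every $A\in \cC^\prime\cap \cC_\mu$, while hypothesis (ii) gives $\E[\xi_{n'}](A)\to c_A$; combining the two yields $\mu(A)=c_A$ on $\cC^\prime\cap \cC_\mu$. Since $\cC^\prime$ is convergence-determining, any other vague limit must agree with $\mu$ on these continuity sets, so all subnet limits coincide and the full net satisfies $\E[\xi_n]\xrightarrow{v}\mu$ with $\mu(A)=c_A$ on $\cC^\prime\cap\cC_\mu$.

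For the almost-sure assertion, I would fix the limit $\mu$ together with a countable convergence-determining class $\{B_k\}_k\subset \cC^\prime$ for $\mu$, perturbed so that each $B_k\in \cC_\mu$ (only countably many parallel boundaries of a Radon measure can carry positive mass). The hypothesis supplies for each $k$ a probability-one event on which $\xi_n(B_k)\to c_{B_k}=\mu(B_k)$; intersecting this countable family yields a single almost-sure event $\Omega_0$ on which $\xi_n(B_k)\to \mu(B_k)$ holds simultaneously for every $k$, and the convergence-determining property of $\{B_k\}$ for $\mu$ then delivers $\xi_n\xrightarrow{v}\mu$ pointwise on $\Omega_0$. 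The main obstacle is arranging the countable determining class to consist of $\mu$-continuity sets inside $\cC^\prime$, since vague convergence translates into set-wise convergence only on $\mu$-continuity sets while our hypothesis furnishes convergence on $\cC^\prime$; this is handled by the standard boundary-perturbation argument for Radon measures, and some additional care is needed at the net level, where subnet extraction must substitute for the more familiar sequential argument in the compactness step.
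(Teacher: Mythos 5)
Your overall route---relative compactness of $\{\E[\xi_{n}]\}$ via the finite covering property, identification of the limit through the convergence-determining class, and a countable determining class to upgrade to the almost-sure statement---is the same route as the proof the paper points to (Proposition 3.4 of \cite{HST}); the paper itself gives no more than that citation. But two steps of your outline do not yet close. First, the identification step as you phrase it is not an argument: two subnet limits $\mu_{1},\mu_{2}$ satisfy $\mu_{i}(A)=c_{A}$ only on $\cC^{\prime}\cap\cC_{\mu_{i}}$, and these continuity-set classes differ, so ``agreeing on these continuity sets'' does not by itself give $\mu_{1}=\mu_{2}$. The clean deduction is: for a fixed subnet limit $\mu$ one has $\E[\xi_{n}](A)\to c_{A}=\mu(A)$ along the \emph{full} net for every $A\in\cC^{\prime}\cap\cC_{\mu}$, so the convergence-determining property applied to the whole net yields $\E[\xi_{n}]\xrightarrow{v}\mu$, and uniqueness of vague limits forces all subnet limits to coincide; this is also exactly where the net-versus-sequence care you defer must actually be spent (the paper's definition is sequential, so you need metrizability of $\cM_{X}^{\#}$ together with the compactness you established). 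Note also that $\mu(A)=c_{A}$ can only be asserted for $A\in\cC^{\prime}\cap\cC_{\mu}$: taking $\E[\xi_{n}]=\delta_{1/n}$ and $A=(0,1]$ shows it fails on non-continuity sets, so this is how the conclusion must be read and proved.

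Second, in the almost-sure part your boundary-perturbation device is rectangle-specific and is not available under the hypotheses of this lemma: $\cC^{\prime}$ is an abstract class, there is nothing to ``perturb,'' and the assumption only provides a countable class $\{B_{k}\}\subset\cC^{\prime}$ that is convergence-determining for $\mu$, not one consisting of $\mu$-continuity sets. Without $B_{k}\in\cC_{\mu}$ you cannot identify $c_{B_{k}}$ with $\mu(B_{k})$ (the first part only gives $\mu=c$ on $\cC^{\prime}\cap\cC_{\mu}$), and then the a.s.\ convergence $\xi_{n}(B_{k})\to c_{B_{k}}$ tells you nothing about convergence to $\mu$. The argument closes once the countable determining class for $\mu$ is taken inside $\cC^{\prime}\cap\cC_{\mu}$; this is precisely what Lemma \ref{key lemma2} (Corollary A.3 of \cite{HST}) supplies in the intended application, where your perturbation of rectangle endpoints is indeed the mechanism that produces it. With these two repairs your outline coincides with the cited proof.
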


An example of convergence-determining classes satisfying the conditions in Lemma \ref{key lemma1} is the following.
\begin{lemma}[Corollary A.3 in \cite{HST}]\label{key lemma2}
  The class
  \[
    \cC^{\prime}=\{(r_{1}, r_{2}]\times(s_{1}, s_{2}],
    [0, r_{2}]\times (s_{1}, s_{2}]\subset\Delta\,:\,
    0\le r_{1}\le r_{2}\le s_{1}\le s_{2}\le \infty\}
  \]
  is a convergence-determining class which satisfies the
  conditions in Lemma \ref{key lemma1}.
\end{lemma}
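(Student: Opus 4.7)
The plan is to verify the three hypotheses of Lemma~\ref{key lemma1} for the class $\cC'$: (a) $\cC'$ is a convergence-determining class for vague convergence on $\Delta$; (b) $\cC'$ has the finite covering property; and (c) for every $\mu\in\cM_{\Delta}^{\#}$, $\cC'$ contains a countable convergence-determining subclass for $\mu$. As a preliminary observation, $\Delta$ is an open subset of the compact Hausdorff space $[0,\infty]^{2}$, and hence is a locally compact Hausdorff space with countable base, so the framework of Lemma~\ref{key lemma1} applies.

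For (b), I would first note that any compact $K\subset\Delta$ is bounded away from both the diagonal $\{x=y\}$ and from $\{x=\infty\}$; otherwise a convergent subsequence in the compact space $[0,\infty]^{2}$ would produce a limit lying outside $\Delta$, contradicting compactness in $\Delta$. Thus there exist $0\le R\le S<\infty$ with $K\subset[0,R]\times(S,\infty]$, and this rectangle already belongs to $\cC'$ (taking $r_{1}=0$, $r_{2}=R$, $s_{1}=S$, $s_{2}=\infty$). Every relatively compact set in $\Delta$ is therefore covered by a single element of $\cC'$.

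For (a), I would argue that the hypothesis $\mu_{n}(A)\to\mu(A)$ for all $A\in\cC'\cap\cC_{\mu}$ implies $\int f\,d\mu_{n}\to\int f\,d\mu$ for every continuous $f$ on $\Delta$ with compact support. Fix such an $f$ and, using (b), pick $B\in\cC'$ containing the support of $f$. Since $\mu$ is boundedly finite, at most countably many vertical lines $\{x=r\}$ and horizontal lines $\{y=s\}$ intersecting $B$ carry positive $\mu$-mass; choosing finitely many coordinates outside this exceptional set, partition $B$ into half-open sub-rectangles $A_{1},\dots,A_{N}\in\cC'\cap\cC_{\mu}$ of diameter small enough that $f$ oscillates by at most $\varepsilon$ on each. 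Then $|\int f\,d\mu_{n}-\sum_{i}f(z_{i})\mu_{n}(A_{i})|\le\varepsilon\,\mu_{n}(B)$ and similarly for $\mu$. A uniform bound $\sup_{n}\mu_{n}(B)<\infty$ follows by applying the hypothesis to any slightly enlarged $B'\in\cC'\cap\cC_{\mu}$ containing $B$; combined with $\mu_{n}(A_{i})\to\mu(A_{i})$ for each $i$, letting $n\to\infty$ and then $\varepsilon\to 0$ yields the desired vague convergence.

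For (c), the same argument is refined to a countable subfamily. Fix a countable dense set $D\subset[0,\infty]$ disjoint from the countably many $\mu$-atomic horizontal and vertical lines, together with the points $0$ and $\infty$, and let $\cC'_{\mu}$ consist of those members of $\cC'$ whose corner coordinates lie in $D$. Then $\cC'_{\mu}$ is countable, every element is a $\mu$-continuity set, and the partition used in (a) can be carried out using only sub-rectangles from $\cC'_{\mu}$. The main obstacle is careful bookkeeping in the partitioning step: one must check that the axis-parallel subdivisions of a rectangle $B\in\cC'$ always preserve the constraint $r_{2}\le s_{1}$ (so sub-rectangles stay inside $\Delta$ and in the prescribed form), and that the right endpoint $\infty$ is handled consistently with the half-open convention $(s_{1},s_{2}]$. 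Once this is in place, the reasoning is a routine Portmanteau-style approximation.
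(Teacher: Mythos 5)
The paper offers no proof of this lemma at all---it is quoted directly as Corollary A.3 of \cite{HST}---so your argument has to stand on its own, and as written it does not. The critical flaw is the claim in step (b) that every compact $K\subset\Delta$ is contained in a \emph{single} rectangle $[0,R]\times(S,\infty]$ with $0\le R\le S<\infty$. Compactness in $\Delta$ does bound the first coordinate and does keep $K$ away from the diagonal, but it does not separate the range of the first coordinate from the range of the second: for the two-point set $K=\{(0,1),(2,3)\}$, or the segment $\{(x,x+1):0\le x\le 10\}$, one would need $R\ge 2$ and $S<1$, which is incompatible with the constraint $r_{2}\le s_{1}$ built into $\cC^{\prime}$ (and without that constraint the rectangle is not a subset of $\Delta$). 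Hence no single member of $\cC^{\prime}$ covers such a $K$. The error is not cosmetic, because it is reused in step (a): you choose one $B\in\cC^{\prime}$ containing $\operatorname{supp}f$ and partition it, but for $f$ whose support runs along the diagonal no such $B$ exists, so the Riemann-sum/oscillation argument collapses at its first step, and the same issue affects the countable subclass construction in (c).

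The statement itself is salvageable along your lines, but the missing bookkeeping is exactly the point. The finite covering property does hold: every $(x,y)\in\Delta$ satisfies $x<y$, so it lies in the interior of some member of $\cC^{\prime}$ (choose $r_{1}<x\le r_{2}\le s_{1}<y\le s_{2}$, with the obvious modifications when $x=0$ or $y=\infty$), and compactness of the closure of a relatively compact set yields a finite subcover---finitely many rectangles, not one. For (a) and (c) you must then run the approximation over such a finite family: observe that intersections and set differences of these half-open rectangles, taken inside a fixed member of $\cC^{\prime}$, are again finite disjoint unions of rectangles automatically satisfying $r_{2}\le s_{1}$, so the family can be disjointified within the class; the corners must avoid the countably many atomic horizontal and vertical lines to stay in $\cC_{\mu}$; and the uniform bound $\sup_{n}\mu_{n}(\cdot)<\infty$ has to be secured on each piece (or on an enlarged continuity rectangle covering it). With that repair your Portmanteau-style argument goes through and is essentially the route taken in \cite{HST}; as submitted, however, the single-rectangle claim is false and the proof has a genuine gap.
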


We finish with the proof of Theorem \ref{th:LLN for PDs}.

\textit{Proof of Theorem \ref{th:LLN for PDs}}.
Suppose that $R$ is a rectangle of the form $(r_{1}, r_{2}]\times (s_{1}, s_{2}]$ or $[0, r_{1}]\times (s_{1}, s_{2}]$ in $\Delta$.
By virtue of Lemma \ref{key lemma1} and Lemma \ref{key lemma2},
we have only to show that there exists $c_{R}\ge 0$ such that for any convex averaging net $\cA=\{A_{n}\}_{n\in \cN}$,
\[
  \frac{1}{\ell(A_{n})}\E[\xi_{q, A_{n}}(R)]\to c_{R} \quad \text{ as } n\to \infty
\]
and if $\Phi$ is ergodic, then
\[
  \frac{1}{\ell(A_{n})}\xi_{q, A_{n}}(R)\to c_{R} \quad \text{ as } n\to \infty \text{ a.s. }
\]
It follows immediately from Theorem \ref{th:LLN for PBNs} and the fact that
$\xi_{q, A_{n}}(R)$ is calculated as
\[
  \begin{aligned}
    \xi_{q, A_{n}}(R)
     & =\xi_{q, A_{n}}([0, r_{2}]\times(s_{1},  \infty])
    -\xi_{q, A_{n}}([0, r_{2}]\times(s_{2},  \infty])                \\
     & \hspace{12pt}+\xi_{q, A_{n}}([0, r_{1}]\times(s_{2}, \infty])
    -\xi_{q, A_{n}}([0, r_{1}]\times(s_{1}, \infty])                 \\
     & =\beta_{q, A_{n}}^{r_{2}, s_{1}}
    -\beta_{q, A_{n}}^{r_{2}, s_{2}}
    +\beta_{q, A_{n}}^{r_{1}, s_{2}}
    -\beta_{q, A_{n}}^{r_{1}, s_{1}}
  \end{aligned}
\]
for $R=(r_{1}, r_{2}]\times (s_{1}, s_{2}]$ and
\[
  \xi_{q, A_{n}}(R)
  =\beta_{q, A_{n}}^{r_{1}, s_{1}}
  -\beta_{q, A_{n}}^{r_{1}, s_{2}}
\]
for $R=[0, r_{1}]\times (s_{1}, s_{2}]$. Thus we arrive at the desired result. \qed

\section*{Acknowledgment}
This work was supported by JST CREST Grant Number
JPMJCR15D3, Japan.
The first named author (T.S.) was supported by
Grant-in-Aid for Exploratory Research (JP17K18740),
the Grant-in-Aid for Scientific Research (B) (JP18H01124) and (S) (JP16H06338) of Japan Society for the Promotion of
Science.



\end{document}